\documentclass[11pt,twoside]{amsart}
\usepackage{amsmath}
\usepackage{amssymb}
\usepackage{a4wide}
\usepackage{latexsym}
\usepackage{epsfig}

\newcommand{\N}{\ensuremath{\mathbb{N}}}
\newcommand{\Z}{\ensuremath{\mathbb{Z}}}
\newcommand{\R}{\ensuremath{\mathbb{R}}}
\newcommand{\E}{\ensuremath{\mathbb{E}}}
\newcommand{\X}{\ensuremath{\mathbb{X}}}

\renewcommand{\H}{\ensuremath{\mathbb{H}}}
\newcommand{\C}{\ensuremath{\mathcal{C}}}

\newcommand{\Rr}{\ensuremath{{\mathcal R}}}
\renewcommand{\P}{\ensuremath{{\mathcal P}}}

\newcommand{\T}{\ensuremath{{\mathcal T}}}

\DeclareMathOperator{\Sym}{Sym}
\DeclareMathOperator{\supp}{supp}

\newtheorem{thm}{Theorem}[section]

\newtheorem{prop}[thm]{Proposition}

\newtheorem{corollary}[thm]{Corollary}

\newtheorem{definition}[thm]{Definition}

\parskip1ex
\parindent0em

\begin{document}

\title{Properties of
  B\"or\"oczky tilings in high dimensional hyperbolic spaces}

\author{Nikolai Dolbilin}
\address{Steklov Mathematical Institute, Gubkin 8, 119991 Moscow GSP-1
  Russia }
\email{dolbilin@mi.ras.ru}

\author{Dirk Frettl\"oh}
\address{Fakult\"at f\"ur Mathematik, Universit\"at
  Bielefeld, 33501 Bielefeld,  Germany}
\email{dirk.frettloeh@math.uni-bielefeld.de}
\urladdr{http://www.math.uni-bielefeld.de/baake/frettloe}

\begin{abstract} In this paper we consider families of
  B\"or\"oczky tilings  in hyperbolic space in arbitrary
  dimension, study some basic properties and classify all possible
  symmetries. In particular, it is shown that these tilings are
  non-crystallographic, and that there are uncountably many tilings
  with a fixed prototile. 
\end{abstract}

\maketitle

\section{\bf Introduction}  \label{sec:intro}

Let ${\mathbb X}$ be either a Euclidean space, or a hyperbolic
space, or a spherical space. Consider compact subsets $T \subseteq
{\mathbb  X}$, such that $T$ is the closure of its interior.  A
collection of such sets $\{T_1,T_2,\ldots\}$ is called a {\em
tiling}, if the union of the $T_i$ is the whole space $\X$ and the
$T_i$ do not overlap, i.e., the interiors of the tiles are
pairwise disjoint. The $T_i$ are called {\em tiles} of the tiling.

In 1975, K.\ B\"or\"oczky published some ingenious constructions of
tilings in the hyperbolic plane $\H^2$ \cite{boe}. His aim was to
show that there is not such a natural definition of density in
the hyperbolic plane $\H^2$ as there is in the Euclidean plane
$\E^2$. A very similar tiling is is mentioned also in \cite{p},
wherefore these tilings are often attributed to R.\ Penrose. In
context of a local theorem for regular point sets (\cite{ddsg}, see
also \cite{do1}), M.I.\ Shtogrin realized that the B\"or\"oczky tilings 
are  not crystallographic. V.S.\ Makarov considered analogues of these 
tilings in $n$-dimensional hyperbolic space and paid attention to
the fact that for all $d \ge 2$, B\"or\"oczky-type prototiles never
admit isohedral tilings of $\H^d$ \cite{mak}. This is of interest in
the context of Hilbert's 18th problem. Let us give a simple
description of one of B\"or\"oczky's constructions, which follows
\cite{ftk}. We assume familiarity with basic terms and facts of
$d$-dimensional hyperbolic geometry, see for instance \cite{and} (for
$d=2$, with emphasis of length and area in $\H^2$) or \cite{rat} (for
$d \ge 2$).    

Let $\ell$ be a line in the hyperbolic plane of curvature
$\chi=-1$ (see Figure \ref{boetiles}). Place points $\{ X_i \, |
\, i \in \Z \}$ on $\ell$, such that the length of the line
segment $X_i X_{i+1}$ is $\ln 2$ for all $i \in \Z$. Draw
through every point $X_i$ a horocycle $E_i$ orthogonal to 
$\ell$, such that all the horocycles $E_i$ have a common ideal
point ${\mathcal O}$ at infinity. On each $E_i$, choose points
$X_i^j$, such that $X_i^0=X_i$ and the length of the arc $X_i^j
X_i^{j+1}$ is the same for all $i,j \in \Z$. Denote by $\ell_j$ the
line parallel to $\ell$, which intersects $E_0$ at $X_0^j$, and
intersects $\ell$ at the ideal  point ${\mathcal O}$.
(In particular: $\ell_0=\ell$.) Due to the choice $|X_i X_{i+1}|=
\ln 2$ and $\chi=-1$, the arc on $E_1$ between $\ell_0$ and
$\ell_1$ is twice the length as the arc on $E_0$ between
the same lines.

Let $H_0$ be a strip between horocycle $E_0$ and horocycle
$E_1$ (including $E_0$ and $E_1$ themselves), and $L_i$ a strip
between two parallel lines $\ell_i$ and $\ell_{i+1}$  (including
$\ell_i$ and $\ell_{i+1}$ themselves). All  intersections
$B_i:=H_0 \cap L_i$, $i\in \Z $, are pairwise congruent shapes.

These shapes tile the  plane \cite{boe},\cite{ftk}. Indeed, they
pave a horocyclic strip between horocycles $E_0$ and $E_1$. It is
clear that shape $B_1$ can be obtained from $B_0$ by means of 
a horocyclic turn $\tau$ about the ideal point $\mathcal O$ that
moves point $X_0\in E_0$ to point $X^1_0 \in E_0$. Any shape $B_i$
in the horocyclic strip $H_0$ is obtained as $\tau^i(B_0)$.
Now, let $\lambda$  be a shift along $\ell_0$ moving the strip $H_0$
to the strip $H_1$. This shift transfers the pavement of $H_0$ to a
pavement of $H_1$. Thus, a sequence of shifts $\lambda^j$ 
($j\in\Z$) along a line $\ell_0$ extends the pavement of the
horocyclic strip $H_0$ to the entire plane, resulting in one of
several possible  B\"or\"oczky tilings.

\begin{figure}[h] \label{boetiles}
\epsfig{file=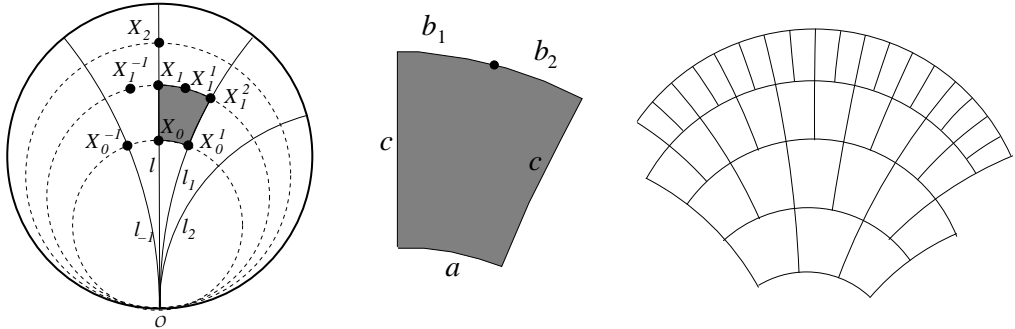}
\caption{The construction of the prototile in the Poincar\'e-disc
  model (left), a B\"or\"oczky pentagon $B$ with the types of the five
  edges indicated (centre), and a small
  part of a B\"or\"oczky tiling (right).}
\end{figure}

Let $B:=B_0=L \cap H_0$. The boundary of $B$ consists of two
straight line edges, labelled $c$ in Figure  \ref{boetiles}
(centre), and two arcs, a shorter one labelled $a$, and a longer arc
labelled $b_1,b_2$. The arcs are parts of horocycles. The
longer arc is subdivided in two halves $b_1$ and $b_2$. For
brevity, we will say that $B$ has three (horocyclic) edges $a,
b_1, b_2$ and two straight line edges $c$. Thus we regard $B$ as a
pentagon. Indeed, though the B\"or\"oczky pentagon $B$ is
neither bounded by straight line segments,  nor it is 
a convex  polygon, it suffices to replace all three  horocyclic
arcs by straight line edges in order to get a convex
polygonal tile. But there is no need to use this modification here.

On the $c$-edges, we introduce an
orientation from edge $a$ to edge $b_1$ or $b_2$ respectively. An
edge-to-edge tiling is a tiling in which any non-empty
intersection of tiles is either their common  edge or a vertex. An
edge-to-edge tiling of the hyperbolic plane by B\"or\"oczky
pentagons, respecting the orientations of $c$-edges, is called
a {\em B\"or\"oczky tiling}, or, for the sake of brevity, 
{\em  B-tiling}. 

A tiling is called {\em crystallographic}, if the symmetry group
of the tiling has compact fundamental domain, see
\cite{m}\footnote{Sometimes, a group is called crystallographic if
  its fundamental domain has finite volume  \cite{v}.}. As we have
already mentioned, all B-tilings are not
crystallo\-gra\-phic. In the next sections we will examine the
symmetry group of B-tilings in detail.

In Section \ref{h2}, we describe shortly the situation in the
hyperbolic plane $\H^2$. Some basic terms and tools are
introduced. Since Section \ref{h2} is a special case of Section
\ref{h4}, some proofs are omitted to Section \ref{h4}. 
In Section \ref{sec:crystthm} we state a necessary and sufficient
condition for a tiling to be crystallographic \cite{do1}. This
statement can be applied to B-tilings in $\H^2$.
In Section \ref{h4} we consider B\"or\"oczky's construction in
arbitrary dimension $d\ge 2$ and obtain some results on $B$-tilings
in $\H^d$. Section \ref{h5} uses these results and gives a complete
classification of the symmetry groups of these tilings. 

We denote $d$-dimensional hyperbolic space by $\H^d$, $d$-dimensional
Euclidean space by $\E^d$, and the set of positive integers by $\N$.

\section{\bf The B\"or\"oczky Tilings in $\H^2$} \label{h2}
In a B\"or\"oczky tiling every tile is surrounded by adjacent
tiles in essentially the same way: A $c$-edge meets always a
$c$-edge of an adjacent tile. A $b_1$-edge ($b_2$-edge resp.) meets an
$a$-edge of an adjacent tile. The $a$-edge of a tile  always touches
either the $b_1$-edge or the $b_2$-edge of an adjacent tile.

Therefore we consider in the following definitions two kinds of
subsets of tiles in a B-tiling, which are connected in
two different ways: either we move from tile to an adjacent
tile only across $c$-edges, or only across $a$-, $b_1$-,  or
$b_2$-edges.

\begin{definition} A {\em ring} in a B-tiling is a set
$(T_i)_{i \in \Z}$ of tiles ($T_i \ne T_j$ for $i \ne j$), such
that $T_i$ and $T_{i+1}$ share $c$-edges for all $i \in \Z$
(see Figure 2).
\end{definition}

\begin{definition}
Let $I\subset \Z$ be a set of consecutive numbers. A {\em
horocyclic path} is a sequence $(T_i)_{i \in I}$ of tiles ($T_i
\ne T_j$ for $i \ne j$), such that, for all $i \in \Z$, $T_i$ and
$T_{i+1}$ share an edge  which is not a $c$-edge (see Figure
2).
\end{definition}

We always require rings and horocyclic paths to contain no
loops, i.e., all tiles in the sequence are pairwise different. 
A ring is a sequence of tiles which is infinite in
both directions, shortly {\em biinfinite}. A ring forms a pavement
of a horocyclic strip bounded by two consecutive horocycles $E_i$
and $E_{i+1}$. A horocyclic path is either finite, or infinite in
one direction, or biinfinite.

If, in the case of horocyclic paths, an $a$-edge  of $T_i$ touches
a $b_1$- or  $b_2$-edge  of $T_{i+1}$, we say shortly: At this
position the horocyclic path goes {\em down}, otherwise we say it
goes {\em up}.
\begin{prop} Any horocyclic path in a B-tiling contains
  either only ups, or only downs, or is of the form
  down-down- $\cdots$-down-up-up- $\cdots$-up.
\label{noupdown} \end{prop}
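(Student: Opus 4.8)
The plan is to reduce the statement to a single local claim and then settle it by a short incidence argument about the $a$- and $b$-edges of one tile. Writing the step-labels of a horocyclic path $(T_i)_{i\in I}$ as a word in the two-letter alphabet $\{\text{down},\text{up}\}$, a word that avoids the factor ``up--down'' (an up immediately followed by a down) is automatically of the form down-$\cdots$-down-up-$\cdots$-up, including the degenerate all-down and all-up cases: once a $u$ appears, the next label cannot be a $d$, so every later label is again $u$. Hence it suffices to prove that \emph{an up-step can never be immediately followed by a down-step.}

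First I would unwind the adjacency rules recorded at the start of Section~\ref{h2}: across a non-$c$-edge two tiles always meet with the $a$-edge of one glued to a $b_1$- or $b_2$-edge of the other. Thus every step of a horocyclic path is of exactly one of these two types, and by the definition of ``down'' and ``up'' the step $T_{i-1}\to T_i$ is an up precisely when the shared edge is the $a$-edge of the \emph{later} tile $T_i$, while the step $T_i\to T_{i+1}$ is a down precisely when the shared edge is the $a$-edge of the \emph{earlier} tile $T_i$.

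Then I would examine three consecutive tiles $T_{i-1},T_i,T_{i+1}$ at a putative up-then-down position. By the previous paragraph, the edge shared by $T_{i-1}$ and $T_i$ is the $a$-edge of $T_i$, and the edge shared by $T_i$ and $T_{i+1}$ is \emph{also} the $a$-edge of $T_i$. But a B\"or\"oczky pentagon has a single $a$-edge, and in an edge-to-edge tiling each edge is shared by exactly two tiles; so $T_{i-1}$ and $T_{i+1}$ are the two tiles meeting $T_i$ along that one edge, which forces $T_{i-1}=T_{i+1}$. This contradicts the requirement that a horocyclic path contains no loops, so the forbidden factor cannot occur and the proposition follows.

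I expect no genuine obstacle here, only an orientation bookkeeping that must be handled with care: one has to verify that it is precisely the up-then-down pattern that is excluded and not its reverse. The reverse, down-then-up, causes no contradiction exactly because the two edges involved are then both $b$-edges of $T_i$, and a pentagon has \emph{two} such edges, $b_1$ and $b_2$, so no coincidence of neighbours is forced. Keeping straight which tile contributes its unique $a$-edge at each step is the one place where attention is needed.
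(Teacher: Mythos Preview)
Your proof is correct and follows essentially the same approach as the paper: both argue that an up followed by a down forces the two outer tiles to coincide (since the middle tile has only one $a$-edge, and going down from it is unique), contradicting the no-loop requirement. Your version is more explicit about which tile contributes the $a$-edge at each step, but the idea is identical.
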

\begin{proof} 
 There is only one way to pass from a tile through its $a$-edge to an
 adjacent  tile. So there is only one way to go
 down from a given tile. If we have in a horocyclic path the
 situation '$T_i$, up to $T_{i+1}$, down to $T_{i+2}$', it follows
 $T_i=T_{i+2}$. This situation is ruled out by the requirement, that
 all tiles in a horocyclic path are different.
So 'up-down' cannot happen, which leaves only the possibilities
mentioned in Proposition \ref{noupdown}. 
\end{proof}

\begin{figure} \epsfig{file=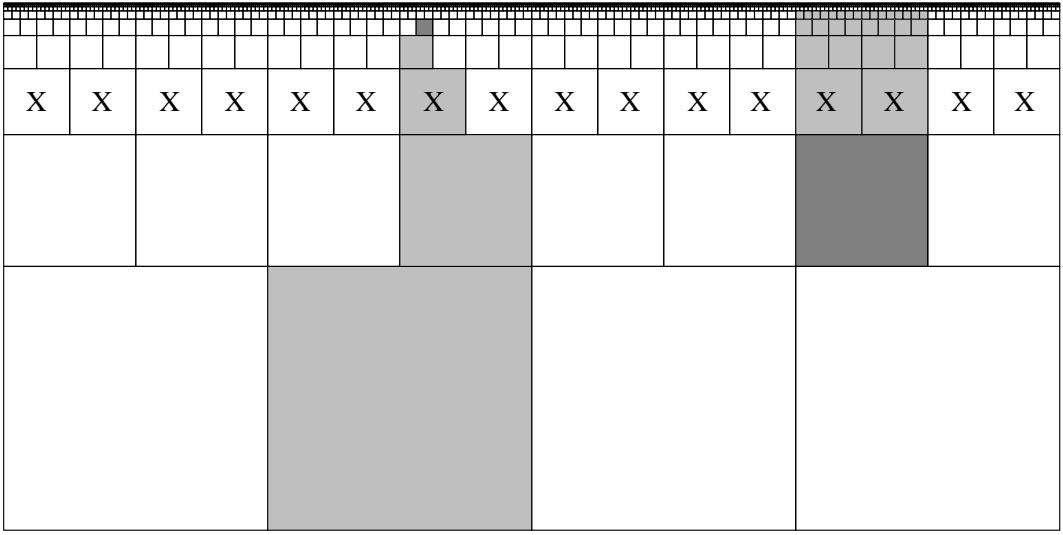}
\caption{A patch of the B\"or\"oczky tiling in the half-plane
  model. The tiles marked by an X show (a part of) a ring. The grey
  shaded tiles show: a horocyclic path of length five 
  (left, it is also the beginning of the tail of
  the dark tile on top); the tower on the dark tile (right, for the
  definition of a tower, cf. Section \ref{h4}).  }
  \label{bpathstowers}
\end{figure}

There are always two possibilities to go up from some tile $T$ in a
B-tiling. That means, there are $2^k$ different paths starting in $T$
and containing exactly $k$ ups (and no downs). Since there is
only one possibility to go down from any tile, there is only
one infinite horocyclic path of the form down-down-down-
$\cdots$ starting in $T$.

\begin{definition}
The unique infinite path of the form down-down-$\cdots $ starting at
a tile $T$ is called the {\em tail (of $T$)}.\label{taildef}
\end{definition}

If tiles $T$ and $T'$ and also tiles $T'$ and $T''$ are linked by
horocyclic paths $w$ and $w'$ respectively, then the tiles $T$
and $T''$ are also linked by a horocyclic path. Therefore, the
property of tiles to be linked by a horocyclic path is an
equivalence relation. Hence a set of tiles is partitioned into a
number of non-overlapping classes $\P _1, \P _2, \ldots$ called
pools.

\begin{definition}  \label{defpool} The set of all tiles  which can be 
  linked with some given tile by a horocyclic path forms a pool.
\end{definition}
It is clear that if $T\in \P$ then $t(T)\subset \P$.
\begin{definition} \label{defequi}
  Two tails $t(T)=(T=T_0,T_2,\ldots )$, $t(T')=(T'=T'_0,T'_2, \ldots )$
  in a B-tiling are called {\em   cofinally equivalent}, written $t(T)
  \sim t(T')$, if there are $m, n \in \N$ such that
  $T_{m+i}=T'_{n+i}$ for all $i \in \N$. 
\end{definition}
If $t(T)\sim t(T')$ then $T$ and $T'$ are linked by a horocyclic
path
\[ T=T_0, T_1, \ldots , T_m, T'_{n-1}, \ldots , T'_1, T'_0 = T', \]
hence $T$ and $T'$ belong to one pool. The inverse statement is also
obviously true. Thus, the following is true.
\begin{prop}
Tiles $T$ and $T'$ belong to a pool if and only if their tails are
cofinally equivalent.  \label{pooltail}
\end{prop}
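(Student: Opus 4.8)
The plan is to prove Proposition \ref{pooltail} as a biconditional, establishing each direction separately. The forward direction (cofinally equivalent tails imply same pool) has essentially already been argued in the paragraph immediately preceding the statement, so my main task is to assemble that observation cleanly and then supply the genuinely new content, which is the converse.

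First I would dispatch the easy direction. Suppose $t(T) \sim t(T')$. By Definition \ref{defequi} there exist $m,n \in \N$ with $T_{m+i} = T'_{n+i}$ for all $i \in \N$; in particular $T_m = T'_n$. Then the concatenation
\[ T = T_0, T_1, \ldots, T_m = T'_n, T'_{n-1}, \ldots, T'_1, T'_0 = T' \]
is a horocyclic path linking $T$ and $T'$ (each consecutive pair shares a non-$c$-edge, since consecutive tiles along a tail do by Definition \ref{taildef}, and the second half traverses $t(T')$ in reverse). Hence $T$ and $T'$ lie in a common pool by Definition \ref{defpool}.

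The harder and more interesting direction is the converse: if $T$ and $T'$ belong to one pool, then $t(T) \sim t(T')$. By definition of a pool, there is a horocyclic path $w = (T = S_0, S_1, \ldots, S_k = T')$. The key structural fact I would exploit is Proposition \ref{noupdown}: any horocyclic path consists of a (possibly empty) block of downs followed by a (possibly empty) block of ups. I would argue that following the tail of $T$ and the tail of $T'$ far enough forces the two tails to merge. Intuitively, going ``down'' is a deterministic operation (there is a unique way to go down from any tile, as noted after Proposition \ref{noupdown}), so if I can show that the two tails eventually pass through a common tile, cofinal equivalence follows immediately from uniqueness of downward continuation.

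The main obstacle is making the merging argument rigorous. My plan is to use the down-then-up normal form of $w$: write $T = S_0, \ldots, S_p$ for the maximal initial down-block and $S_p, \ldots, S_k = T'$ for the up-block, so that $S_p$ is the lowest tile on the path. The tail $t(T)$ begins by going down and, by uniqueness of the down-step, must agree with $S_0, S_1, \ldots, S_p$ and then continue downward; similarly the up-block read in reverse, $T' = S_k, S_{k-1}, \ldots, S_p$, is a sequence of downs, so $t(T')$ must agree with $S_k, \ldots, S_p$ and then continue downward. Thus both tails pass through the common tile $S_p$ and thereafter are both determined as the unique downward path from $S_p$; hence from that point on they coincide tile-for-tile. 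This yields indices $m, n$ (the positions of $S_p$ in $t(T)$ and $t(T')$ respectively) with $T_{m+i} = T'_{n+i}$ for all $i$, which is exactly cofinal equivalence. The subtlety I would be most careful about is verifying that the tail really does follow the down-block of $w$ verbatim --- this rests on the fact that from a given tile the down-step is unique, whereas up-steps are not, so reversing the up-block turns it into a forced chain of down-steps that the tail is compelled to follow.
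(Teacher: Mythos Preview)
Your proof is correct and follows the same approach as the paper. The paper dispatches the direction $t(T)\sim t(T')\Rightarrow$ same pool via the concatenated path exactly as you do, and for the converse merely says ``the inverse statement is also obviously true''; your argument using Proposition~\ref{noupdown} to locate the bottom tile $S_p$ and then invoking uniqueness of the down-step is precisely the natural way to unpack that ``obviously''.
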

A tail $(T_0,T_1,T_2,\ldots)$ gives rise to a sequence $s(T) =
(s_1,s_2,\ldots) \in   \{-1,1\}^{\N}$ in the following way: If the
$a$-edge  $a$ of $T_{i-1}$ coincides with the $b_1$-edge  of
$T_i$, then set $s_i:=1$, otherwise (if it coincides with the
$b_2$-edge) set $s_i:=-1$.

\begin{definition} Let $s = (s_1,s_2,\ldots), s' =
  (s'_1,s'_2,\ldots) \in \{-1,1\}^{\N}$.
\begin{itemize}
\item $s$ and $s'$ are {\em cofinally equivalent}, denoted by 
  $s \sim s'$, if there exist \\ $m, n \in \N$, such that
  $s_{m+i}=s'_{n+i}$ for all $i \in \N$. 
\item $s$ is {\em periodic (with period $k$)}, if there exists 
  $k \ge 2$ such that $s=(s_k,s_{k+1},\ldots)$.
\item $s$ is {\em cofinally periodic}, if $s$ is cofinally
equivalent to a periodic sequence.
\end{itemize}
\label{perdef} \end{definition}
\begin{prop} Every pool contains infinitely many tiles. Moreover,
  every intersection of any ring with any pool contains
  infinitely many tiles. \label{poolinfinit}
\end{prop}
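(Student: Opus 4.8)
The plan is to encode the combinatorics of ``up'' and ``down'' by a single map. Write $\partial T$ for the unique tile reached from $T$ by going down across its $a$-edge; this is well defined and total, since every tile has an $a$-edge with a neighbour across it, and there is only one way to pass through the $a$-edge (as used in the proof of Proposition~\ref{noupdown}). The tail is then $t(T)=(T,\partial T,\partial^2 T,\ldots)$. The one geometric input needed is that each horocyclic $b$-edge is twice as long as an $a$-edge, so every tile has exactly one down-neighbour but exactly two up-neighbours (across $b_1$ and across $b_2$); equivalently $\partial$ is two-to-one. I organise the tiling into levels via the rings: a ring is precisely the set of tiles of one horocyclic strip $H_j$, a down-step sends $H_j$ to $H_{j-1}$, and an up-step sends $H_j$ to $H_{j+1}$, so $\partial^n T$ lowers the level by $n$.

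By Proposition~\ref{pooltail}, two tiles lie in a common pool iff their tails are cofinally equivalent, i.e. iff $\partial^m T=\partial^n T'$ for some $m,n$; thus a pool is a grand orbit of $\partial$. The first assertion is then immediate: the tiles $\partial^n T$, $n\ge 0$, lie in pairwise distinct strips $H_{j_0-n}$, hence are pairwise distinct, so the tail already gives infinitely many tiles of the pool.

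For the second assertion fix a ring $R=H_j$ and a pool $\P$, and choose $T\in\P$ with $T\in H_{j_0}$. For every $n$ with $u_n:=n-(j_0-j)\ge 0$ the tile $\partial^n T$ lies in $H_{j_0-n}$, and any tile $V$ of $H_j$ reached from $\partial^n T$ by $u_n$ successive up-steps satisfies $\partial^{u_n}V=\partial^n T$; hence $t(V)\sim t(T)$ and $V\in\P\cap H_j$. So it suffices to show that there are $2^{u_n}$ such tiles, because then letting $n\to\infty$ produces infinitely many tiles in $\P\cap R$.

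The crux, and the step I expect to be the main obstacle, is the claim that the $2^{u}$ up-paths of a fixed length $u$ issuing from a single tile end at $2^{u}$ pairwise distinct tiles, i.e. that the up-neighbour structure is a genuine binary tree with no collisions. I would prove this by reversal: an up-path from $X$ to $Y$ reverses to a down-path from $Y$ to $X$, and the down-path out of $Y$ is unique since each down-step is forced. Hence there is at most one up-path from $X$ to any given $Y$, so two distinct length-$u$ choice sequences cannot reach the same tile, giving exactly $2^{u}$ distinct endpoints. (One also checks these endpoint sets are nested as $n$ grows, because $\partial^{n}T$ is itself one of the two up-neighbours of $\partial^{n+1}T$, so in fact $\P\cap H_j$ equals their union.) The remaining care is to make the reversal argument rigorous and to confirm from the geometry that up- and down-neighbours always exist, so that $\partial^n T$ and all the required up-paths are defined for every $n$.
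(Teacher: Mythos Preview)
Your proof is correct and follows essentially the same strategy as the paper's: descend along the tail and then ascend back up to produce $2^k$ tiles in the given ring and pool. If anything, your write-up is more careful than the paper's on two points---you treat an arbitrary target ring $R$ explicitly (the paper only literally handles the ring containing the starting tile $T$), and you supply the reversal argument showing the $2^u$ up-paths have pairwise distinct endpoints, which the paper simply asserts.
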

\begin{proof}
The tail of any tile $T$ contains infinitely many tiles, all belonging to
the same pool. Moreover, from any tile there are starting $2^k$
horocyclic paths of length $k$ going upwards. The final tiles of these
paths are all in the same ring.

So, by going down from $T$ by $k$ steps and after that going up $k$
steps  gives us horocyclic paths to $2^k$ different tiles in the same
ring as $T$ and in the same pool as $T$. Since $k$ can be chosen
arbitrary large, the claim follows. \end{proof}
\begin{prop} In a B-tiling in $\H^2$ there is either one
  pool or two pools. \label{12pools}
\end{prop}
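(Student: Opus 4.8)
The plan is to prove that the number of pools lies in $\{1,2\}$. The lower bound is immediate: a B-tiling has at least one tile, and the pools partition the non-empty set of tiles, so there is at least one pool. All the work is in the upper bound. By Proposition \ref{pooltail} it suffices to rule out three tiles whose tails are pairwise non-cofinally-equivalent, and by Definition \ref{defequi} two tails are cofinally equivalent exactly when, read as descending sequences of tiles, they eventually pass through the same tiles; so I must show that one cannot find three tails that pairwise never meet.

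First I would record the combinatorial structure of a single ``down'' step. Fix a ring $R$ and the adjacent ring $R'$ reached by going down, and label the tiles of each ring by $\Z$ in the order in which they occur along the ring. Since from every tile there is exactly one way down and exactly two ways up, the down-map $\pi\colon R\to R'$ is well defined, surjective, and exactly $2$-to-$1$; since consecutive rings are nested monotonically around the common ideal point $\mathcal O$, the map $\pi$ is order-preserving. An order-preserving, exactly $2$-to-$1$ surjection $\Z\to\Z$ has convex fibres of size two, i.e.\ its fibres are pairs of consecutive integers. The two consequences I need are: (i) among any three consecutive tiles of $R$, some adjacent pair has a common down-neighbour (a ``merge''); and (ii) $\pi$ contracts gaps, namely $\pi(j)-\pi(i)\le\lceil (j-i)/2\rceil$ for $i<j$, with equality $\pi(i+2)-\pi(i)=1$ whenever $j-i=2$.

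Now suppose, for contradiction, that $T_1,T_2,T_3$ lie in three distinct pools. Descending along the tails until all three reach a common ring, let $P_1(k)<P_2(k)<P_3(k)$ be the positions of the three tails after $k$ further down-steps; the order is strict and preserved because the tails never meet and $\pi$ is order-preserving. Strictness forces $P_3(k)-P_1(k)\ge 2$ for every $k$. But by (ii) the outer gap satisfies $P_3(k+1)-P_1(k+1)\le\lceil (P_3(k)-P_1(k))/2\rceil$, so it is a sequence of integers that stays $\ge 2$ yet strictly decreases while it exceeds $2$; hence it equals $2$ at some level, and one more step makes it $\le 1$, contradicting $P_3-P_1\ge 2$. (Equivalently, once $P_1,P_2,P_3$ are three consecutive tiles, property (i) forces two of the tails to merge.) This shows there are at most two pools, completing the proof.

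The contraction estimate is routine; the step that requires care, and which I would isolate as a preliminary lemma, is the structural fact that the down-map between consecutive rings is order-preserving and exactly $2$-to-$1$ with consecutive fibres. This rests on the local picture around a tile (one $a$-edge, a $b$-edge split into $b_1,b_2$, the two up-neighbours sitting on $b_1$ and $b_2$ and sharing the $c$-edge over the midpoint of the long arc) together with the monotone nesting of the rings around $\mathcal O$. I would also remark that both values in $\{1,2\}$ actually occur: the tiling built by the translations $\lambda^j$ in the Introduction has exactly two pools, separated by the axis $\ell_0$ of $\lambda$, so the bound is sharp, although only the upper bound is needed for the statement.
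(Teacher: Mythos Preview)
Your argument is correct, and it takes a genuinely different route from the paper's proof.

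The paper argues geometrically: it observes that any boundary point between two pools lies on an entire line through the ideal point $\mathcal{O}$, so two adjacent pools are separated by such a line $\ell'$. A hypothetical third pool would then force a second separating line $\ell''$, trapping one pool in the strip between $\ell'$ and $\ell''$; but that pool would then meet any given ring in only finitely many tiles, contradicting Proposition~\ref{poolinfinit}. Your proof, by contrast, is purely combinatorial: you isolate the structure of the down-map between consecutive rings (order-preserving, exactly $2$-to-$1$, with fibres consisting of adjacent pairs), deduce the contraction inequality $\pi(j)-\pi(i)\le\lceil(j-i)/2\rceil$, and then show that three pairwise-disjoint tails cannot coexist because their outer gap in a common ring, while forced to remain at least $2$, is driven below $2$ in finitely many steps.

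The trade-offs are as follows. Your approach is more self-contained---it does not invoke Proposition~\ref{poolinfinit} or any statement about lines and connectedness of pools---and the contraction mechanism makes the ``at most two'' bound very transparent. The paper's geometric argument, on the other hand, is the one that scales cleanly to $\H^{d+1}$ (Theorem~\ref{2kpools}): there the boundary hyperplanes are pairwise orthogonal and one reads off the $2^k$ count directly, whereas adapting your contraction argument to a $2^d$-to-$1$ down-map with $d$-cube fibres would require tracking gaps in $d$ independent coordinate directions. Your remark that the structural lemma about $\pi$ deserves to be stated separately is well taken; that is indeed where the geometry of the tiling enters your otherwise combinatorial proof.
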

\begin{proof}
First of all, note the following. Let $x$ be an interior point
of some pool, and let $\ell'$ be a line such that $x \in \ell'$, and 
such that $\ell'$ contains the ideal point $\mathcal O$ at 
infinity (compare Section \ref{sec:intro}). Then $\ell'$
is contained entirely in the interior of the
pool. Therefore, if a point $x$ is on the boundary
of two pools then the line $\ell'$ entirely belongs to the
boundary of two pools.

Assume there is more than one pool. Consider some ring $R$.
Each of the pools has infinite intersection with $R$. Let $T$
and $T' \in R$ be adjacent tiles from different pools, say,
$\P_1$ and $\P_2$.  Then a line $\ell'$ containing the common
$c$-edge of $\P_1$ and $\P_2$ separates these pools. Moreover,
since a pool is a linearly connected set, these pools lie on
opposite sides each  of the line $\ell'$. Assume there is a third
pool $\P_3$. Let it lie on the same side of $\ell'$ as $\P_2$.
Since $\P_3\cap R \neq \varnothing$, there is some tile $T_3 \in
\P_3 \cap R$. There exists a line $\ell''$ which separates $\P_2$ and
$\P_3$. The pool has to lie between two parallel  lines $\ell'$
and $\ell''$. But this impossible because in this case the
intersection $\P\cap R$ is finite what contradicts Proposition
\ref{poolinfinit}.
\end{proof}
In particular, there are two pools if and only if there are tiles 
$T, T' \in \T$, whose tails have sequences $s(T)=(1,1,1,\ldots)$ and
$s(T')=(-1,-1,-1,\ldots)$.

\begin{prop} In any B-tiling holds: $t(T) \sim t(T')
  \Leftrightarrow s(T) \sim s(T')$
\end{prop}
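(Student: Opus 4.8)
The plan is to prove the two implications separately and to lean on the preceding structural results---Proposition \ref{pooltail} and Proposition \ref{12pools}---to dispose of the harder direction, rather than arguing purely symbolically.

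For the forward implication $t(T)\sim t(T')\Rightarrow s(T)\sim s(T')$ I would argue directly from the definition of tail equivalence: there are $m,n$ with $T_{m+i}=T'_{n+i}$ for all $i$. The point is that each symbol $s_k$ is read off \emph{only} from the adjacent pair $(T_{k-1},T_k)$, since it records which of the edges $b_1,b_2$ of $T_k$ is met by the $a$-edge of $T_{k-1}$. For $k=m+1+i$ one has $(T_{k-1},T_k)=(T_{m+i},T_{m+1+i})=(T'_{n+i},T'_{n+1+i})$, the very pair determining $s'_{n+1+i}$; hence $s_{m+1+i}=s'_{n+1+i}$ for all $i\ge 0$, which is precisely cofinal equivalence of the symbol sequences. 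This direction is therefore immediate.

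For the reverse implication I would argue by contraposition, exploiting the pool structure. Assume the tails do not merge. Then by Proposition \ref{pooltail} the tiles $T,T'$ lie in distinct pools, and by Proposition \ref{12pools} there are exactly two pools, separated by a line $\ell'$ through the ideal point ${\mathcal O}$, with $T$ and $T'$ on opposite sides. Since $T\in\P$ implies $t(T)\subset\P$, the entire tail of $T$ is confined to one side of $\ell'$. Passing to the upper half-plane model with ${\mathcal O}=\infty$, each tile becomes a Euclidean interval, the tail becomes a nested increasing chain of ancestor intervals whose widths double (hence tend to infinity), and $s_i=+1$ or $-1$ according as the $i$-th ancestor is formed by extending to one side or the other. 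A family of ever-widening intervals trapped inside a fixed half-line $\{x<x_0\}$ (the pool's side of $\ell'$) can only grow away from the barrier $x=x_0$ from some index on, which forces $s(T)$ to be eventually constant; the opposite side forces $s(T')$ eventually constant with the opposite value. Sequences that are eventually equal to $+1$ and to $-1$ respectively are not cofinally equivalent, and this is exactly the contrapositive of what is claimed.

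The main obstacle is the eventually-constant step: turning the geometric fact that the tail is confined to one side of $\ell'$ into the symbolic statement that $s(T)$ is eventually constant, with opposite constant values on the two sides. I would make this precise by writing the endpoint recursion of the ancestor intervals governed by $(s_i)$: confinement to $\{x<x_0\}$ bounds the intervals on the $x_0$-side while their widths diverge, which is only possible if after some index every extension is to the unconstrained side. Some care is also needed to confirm that Proposition \ref{12pools} genuinely produces a single separating line with the two pools occupying its two sides, so that the dichotomy of eventual symbols is exhaustive; the one-pool case requires no separate treatment, since then no two tails fail to merge and the contrapositive statement is vacuous.
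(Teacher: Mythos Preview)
Your proposal is correct and follows essentially the same route as the paper: the forward direction is immediate from the definition of $s(T)$, and the backward direction splits into the one-pool case (trivial) and the two-pool case, where the key fact is that the two pools correspond to sequences that are eventually constant with opposite signs. The only difference is presentational: the paper argues the backward direction directly and invokes the remark after Proposition~\ref{12pools} (that two pools occur iff there are tiles with $s(T)=(1,1,\ldots)$ and $s(T')=(-1,-1,\ldots)$) as a given, whereas you argue by contraposition and actually supply the geometric justification for that eventually-constant claim via the half-plane interval picture.
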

\begin{proof} One direction ($\Rightarrow$) is clear from the
  construction of $s(T)$ out of $t(T)$.

The other direction: If there is only one pool, all tiles are
cofinally equivalent, and we are done. If there are two pools
$\P_1 \ne \P_2$, we know that these pools corresponds to
sequences $(-1,-1,-1,\ldots)$ and $(1,1,1,\ldots)$. Therefore, if
$s(T) \sim s(T')$, then $T$ and $T'$ belong to the same pool:
$t(T) \sim t(T')$.
\end{proof}

Let $\Sym(\T)$ be the symmetry group of $\T$, i.e., the set of all
isometries $\varphi$ where $\varphi(\T)=\T$. The group of order two is
denoted by $\C_2$. 
\begin{thm} The symmetry group $\Sym(\T)$ of any B-tiling in $\H^2$ is
\begin{itemize}
\item isomorphic to $\Z \times \C_2$ in the case of two pools, 
\item isomorphic to $\Z$ in the case of one
pool and $s(T)$  periodic for some $T$,
\item trivial else.
\end{itemize} \label{thm2dim}
\end{thm}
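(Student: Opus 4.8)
The plan is to analyze how any symmetry $\varphi \in \Sym(\T)$ must act on the combinatorial structure established in the previous propositions, and then to determine the group in each of the three cases.

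The plan is to determine $\Sym(\T)$ by first constraining the shape of an arbitrary symmetry and then reading off the group from the invariants supplied by the earlier propositions (number of pools, periodicity of $s$). First I would show that every $\varphi \in \Sym(\T)$ fixes the distinguished ideal point ${\mathcal O}$: the horocyclic edges are exactly the edges not lying on straight lines, so $\varphi$ permutes them, hence permutes the horocycles $E_i$, which all share ${\mathcal O}$, and therefore $\varphi({\mathcal O}) = {\mathcal O}$. Working in the upper half-plane model with ${\mathcal O} = \infty$, the isometries fixing $\infty$ are precisely $z \mapsto \alpha z + \beta$ and $z \mapsto -\alpha\bar z + \beta$ with $\alpha > 0$, $\beta \in \R$; since the heights of the $E_i$ form a geometric progression of ratio $2$ and $\varphi$ must preserve this set, the dilation factor is forced to be $\alpha = 2^{k}$ for some $k \in \Z$.

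Next I would package this as two homomorphisms: the orientation character $\epsilon\colon \Sym(\T) \to \C_2$ and the strip-shift $k\colon\Sym(\T) \to \Z$ recording the exponent above (additive, since heights multiply under composition). The map $(\epsilon,k)$ is injective: an element of its kernel is an orientation-preserving isometry fixing every horocyclic strip, i.e. $z \mapsto z+\beta$; but the $c$-edges in the $i$-th strip form a lattice whose spacing is proportional to $2^{-i}$, and invariance in strips of unboundedly large spacing forces $\beta = 0$. Hence $\Sym(\T)$ embeds in $\Z \times \C_2$, and it remains to identify $\ker k = \Sym(\T)\cap(\{0\}\times\C_2)$ and $\operatorname{im} k \le \Z$.

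Two reflections in vertical geodesics compose to a strip-fixing translation, hence (by the above) coincide, so $\ker k$ is trivial or isomorphic to $\C_2$, the latter exactly when some reflection $\rho$ in a vertical geodesic $\ell^*$ is a symmetry. The central claim, and what I expect to be the main obstacle, is that this happens if and only if $\T$ has two pools. For the forward direction I would track the axis $\ell^*$ strip by strip toward ${\mathcal O}$: a tile centered on $\ell^*$ would have a parent meeting $\ell^*$ in its interior but off-center, which $\rho$ cannot fix, so $\ell^*$ must run along $c$-edges in every strip; the tiles hugging $\ell^*$ on either side then form tails whose sequences are constant, namely $(1,1,\dots)$ and $(-1,-1,\dots)$, which by the remark following Proposition \ref{12pools} forces two pools. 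The converse uses Proposition \ref{12pools}: two pools provide a separating geodesic through ${\mathcal O}$ along $c$-edges, and the requirement that this line never be crossed by the (global, one-per-strip) merging of tiles toward ${\mathcal O}$ rigidifies the tiling into the mirror-symmetric configuration, for which $\rho$ is a symmetry.

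Finally, $\operatorname{im} k$ is a subgroup of $\Z$, hence trivial or infinite cyclic, and I would show it is nontrivial precisely when $s(T)$ is periodic for some $T$: a shift $\varphi$ with $k(\varphi)=p\neq 0$ carries a tail to one lying $p$ strips nearer ${\mathcal O}$ and agreeing with it cofinally, which yields $s_{i+p}=s_i$ eventually, while conversely a periodic $s(T)$ lets one build the period-$p$ self-similarity exactly as the shift $\lambda$ is constructed in Section \ref{sec:intro}. Assembling the pieces gives the three cases. If $\ker k$ is trivial then $(\epsilon,k)$ makes $\Sym(\T)\cong\operatorname{im}k$, which is $\Z$ when some $s(T)$ is periodic and trivial otherwise; this covers the one-pool cases. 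If there are two pools then $\ker k\cong\C_2$ and, the tiling being rigid and self-similar, $\operatorname{im}k\cong\Z$; normalizing $\ell^*=\{x=0\}$ makes the generating shift $z\mapsto 2^{p}z$ commute with $\rho\colon z\mapsto -\bar z$, so the extension splits as a direct product and $\Sym(\T)\cong\Z\times\C_2$.
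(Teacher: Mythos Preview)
Your argument is correct and takes a genuinely different route from the paper. The paper defers this statement to the general $(d{+}1)$-dimensional Theorem~\ref{thmsymgrp}, whose proof first exhibits the hyperoctahedral group $B_k$ inside $\Sym(\T)$ via reflections in the pool-bounding hyperplanes and their bisectors, and then excludes further symmetries through a chain of claims (Claims 1--6) about how symmetries act on tails and layers. You instead work model-theoretically in the upper half-plane: the pair $(\epsilon,k)$ embeds $\Sym(\T)$ into $\C_2\times\Z$ outright, injectivity comes from the elementary lattice-spacing observation, and the theorem reduces to computing $\ker k$ and $\operatorname{im} k$. This is cleaner and more direct for $d=1$, giving the upper bound on $\Sym(\T)$ immediately; the price is that it does not generalise, since in higher dimensions the analogue of $\epsilon$ must land in $B_d$ and establishing injectivity there is essentially the content of the paper's Claims 4--6. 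Two spots in your sketch deserve a sentence more: for ``two pools $\Rightarrow$ reflection symmetry'' you need that the tiling of a tower is rigid (so reflecting the constant tails along $\ell^*$ matches the pools tile-for-tile, as the paper argues via Proposition~\ref{pooltower}); and your appeal to the shift $\lambda$ of Section~\ref{sec:intro} for the periodic case is misleading, since $\lambda$ is not in general a symmetry of $\T$---the right move, as in the paper's Claim 3, is to extend the periodic tail to a biinfinite periodic horospheric path, which determines the pool and hence the tiling, and is visibly preserved by the period-$p$ shift.
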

\begin{proof} (in Section \ref{h4}) \end{proof}
If there is one pool and $s(T)$ is periodic for some $T$, it can
happen, that $\Sym(\T)$ contains shifts along a line, and also
'glide-reflections', i.e. a reflection followed by a shift along a
line. The latter is the case, if $s=(s_1,s_2,\ldots)$ has a period
$k$ as in Definition \ref{perdef}, where $k$ is an even number,
and if holds:
\begin{equation} \label{essper}
s=(-s_{k/2+1},-s_{k/2+2}, \ldots).
\end{equation}
In the following, we mean by {\em essential period}
$\frac{k}{2}$, if (\ref{essper}) holds, otherwise $k$.

{\em Example:} A tiling $\T$ with a periodic sequence
\[ s(T)=(1,1,-1,-1,1,1,-1,-1,\ldots) \]
 has period 4, and essential period 2. $\T$
has a symmetry $\varphi$, where $\varphi$ is a glide-reflection.
The action of the reflection on $s(T)$ gives
$(-1,-1,1,1,-1,-1,1,1,\ldots)$. This is followed by a shift along
an edge of type $C$ along two tiles which gives
\[ (1,1,-1,-1,1,1,-1,-1,\ldots)=s(T). \]
\begin{corollary}
The fundamental domain of $\Sym(\T)$ is
\begin{itemize}
\item one half ring in the case of two pools, \item the union of
$k$ rings in the case of one pool and $s(T)$ for some $T$
  periodic, where $k$ is the essential period of $s(T)$,
\item $\H^2$ else.
\end{itemize}
\end{corollary}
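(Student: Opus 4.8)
The plan is to take the group structure from Theorem~\ref{thm2dim} as given and, in each of its three cases, realize the generators as explicit isometries and read off a fundamental domain. The one structural fact I would use throughout is that every symmetry of a B-tiling fixes the ideal point $\mathcal O$ and hence preserves the horocyclic foliation around $\mathcal O$; consequently it induces a shift $R_j \mapsto R_{j+m}$ (for some $m \in \Z$) on the rings $R_j$, where $R_j$ paves the strip between the consecutive horocycles $E_j$ and $E_{j+1}$. So each symmetry either fixes every ring setwise ($m=0$, realized by a horocyclic turn or by a reflection across a line through $\mathcal O$) or permutes the rings by a fixed nonzero shift ($m\neq 0$, realized by a hyperbolic translation or a glide-reflection along a line through $\mathcal O$). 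This dichotomy is what converts the algebra of Theorem~\ref{thm2dim} into a geometric fundamental domain.

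In the two-pool case $\Sym(\T) \cong \Z \times \C_2$. I would identify the $\Z$-factor with the shift $\lambda$ carrying each ring to the next, $R_j \mapsto R_{j+1}$ (the self-similarity scaling; note that a pure horocyclic turn cannot be a symmetry here, since tiles in adjacent rings differ in width by a factor of two). As $\lambda$ acts transitively on the index set, a fundamental domain for $\langle \lambda \rangle$ alone is a single ring. For the $\C_2$-factor I would take the reflection $\rho$ across the line $\ell'$ separating the two pools, exactly the line appearing in the proof of Proposition~\ref{12pools}: since $\ell'$ runs through $\mathcal O$ and carries $c$-edges, $\rho$ fixes $\ell'$, maps every ring to itself, and interchanges $\P_1$ and $\P_2$. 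Because each ring meets both pools (Proposition~\ref{poolinfinit}), $\rho$ genuinely bisects each ring. As $\rho$ commutes with $\lambda$, cutting one ring along $\ell'$ yields a fundamental domain, namely a half ring.

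In the one-pool periodic case $\Sym(\T) \cong \Z$ is generated by a single isometry $\varphi$ along a line through $\mathcal O$. If (\ref{essper}) fails, I would show $\varphi$ is a pure shift with $R_j \mapsto R_{j+k}$, where $k$ is the (essential) period, so that the $k$ consecutive rings $R_0,\dots,R_{k-1}$ tile $\H^2$ under $\langle \varphi \rangle$ and form a fundamental domain. If (\ref{essper}) holds, $\varphi$ is a glide-reflection whose square is the shift by the full period $k$; here $\varphi$ itself sends the block of the first $k/2$ rings onto the next $k/2$ rings, the reflection across the glide axis being folded into the inter-ring shift, so a block of $k/2=$ essential-period rings is a fundamental domain. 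In both subcases the fundamental domain is a union of (essential period)-many rings, and in the remaining case $\Sym(\T)$ is trivial, so its fundamental domain is all of $\H^2$.

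The hard part will be the glide-reflection subcase: I must rule out any further halving and confirm that exactly $k/2$ rings (not $k/4$, and not a block cut lengthwise along the axis) form the fundamental domain. The decisive check is that $\varphi$, being the axis-reflection composed with the shift of $k/2$ rings, already carries the bottom $(k/2)$-ring block $D$ onto the adjacent block; hence the iterates $\varphi^n(D)$ cover $\H^2$ with pairwise disjoint interiors, and since no nontrivial power of $\varphi$ maps $D$ into itself there are no interior identifications within $D$. The corresponding covering-and-disjointness verifications in the two-pool and pure-shift cases are routine once $\lambda$ is seen to permute the rings freely up to the period and $\rho$ is seen to be an involution fixing $\ell'$.
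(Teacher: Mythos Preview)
The paper states this corollary without proof, as an immediate consequence of Theorem~\ref{thm2dim} and the surrounding discussion of essential periods; your proposal is therefore not competing with an existing argument but supplying the details the paper leaves implicit. Your approach---take the abstract group from Theorem~\ref{thm2dim}, realise its generators as concrete isometries (the shift $\lambda$ along the pool-separating line $\ell'$ and the reflection $\rho$ across it in the two-pool case; the shift or glide-reflection along a line through $\mathcal O$ in the one-pool periodic case), and read off the fundamental domain from how these act on the stack of rings---is exactly what the paper has in mind, and your case analysis is correct.

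Two small remarks. First, your parenthetical ruling out horocyclic turns is right in substance but the phrasing ``tiles in adjacent rings differ in width by a factor of two'' is model-dependent; the intrinsic statement is that a parabolic isometry fixing $\mathcal O$ displaces by different horocyclic arc-lengths on $E_j$ and $E_{j+1}$ (ratio $2$), so it cannot respect the tile structure in \emph{every} ring simultaneously. Second, in the two-pool case you should note explicitly that the shift $\lambda$ is along $\ell'$ (the common boundary of the pools), since a hyperbolic translation along any other line through $\mathcal O$ would move $\ell'$ off itself and hence fail to preserve the pool decomposition; once $\lambda$ and $\rho$ share the axis $\ell'$ the commutation and the half-ring fundamental domain are immediate. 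With these clarifications your argument is complete.
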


\section{\bf B\"or\"oczky tilings are non-crystallographic}
\label{sec:crystthm}

Let $\T$ be a face-to-face tiling in $\R^d$ or $\H^d$. 
\begin{definition} The 0-corona $C_0(T)$ of a tile $T$ is $T$ itself.
The $k$-corona $C_k(T)$ of $T$ is the complex of all
tiles of $\T$ which 
have a common $(d-1)$-face with some $T' \in C_{k-1}(T)$.
\end{definition}
Figure \ref{bcorona} shows a tile $T$ in a B-tiling together with its
first and second coronae. Note that, for $T \ne T'$, the coronae
$C_k(T)$ and $C_k(T')$ can coincide as complexes. Nevertheless, the
corona denoted by $C_k(T)$ is considered as a corona about the centre
$T$, whereas corona $C_k(T')$ considered  as a corona about $T'$.

\begin{definition}
Coronae $C_k(T)$  and $C_k(T')$ are considered as congruent if
there is an isometry that moves $T$ to $T'$ and $C_k(T)$ to
$C_k(T')$.\label{congrcoronae}
\end{definition}
Given a tiling $\T$, denote by $N_k$ the number of congruence classes
of $k$-coronae.

\begin{figure}[t] \epsfig{file=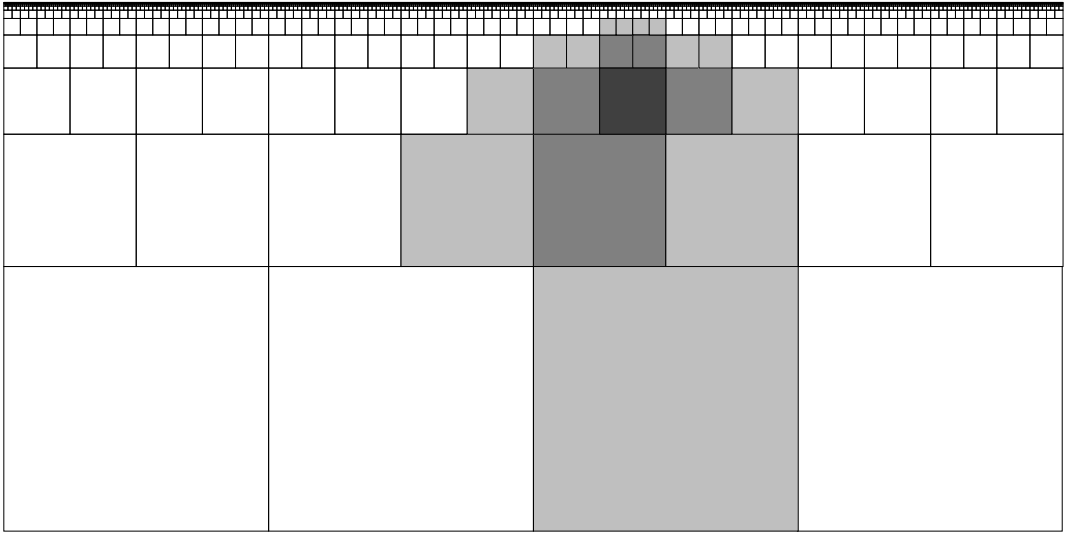} \label{bcorona}
\caption{The first corona (medium grey) and the second corona (all
  grey tiles) of the dark tile.}
\end{figure}

\begin{prop} The number $N_k$ of different $k$-coronae $(k \ge 1)$ in
  a  B-tiling is $2^{k-1}$,  up to isometries. \label{ncorona}
\end{prop}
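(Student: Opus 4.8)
The plan is to prove the sharper, structural statement that the congruence class of the $k$-corona $C_k(T)$ is completely encoded by the finite initial segment $(s_1,s_2,\ldots,s_{k-1})$ of the descent sequence $s(T)$ attached to the tail of $T$, and that every binary word of length $k-1$ occurs and determines a distinct class. Since there are $2^{k-1}$ words in $\{-1,1\}^{k-1}$, this yields $N_k=2^{k-1}$. I would organise it as an induction on $k$, proving the recursion $N_k=2\,N_{k-1}$ with base case $N_1=1$.

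First I would record the local combinatorics from Section \ref{h2}: every tile has two $c$-neighbours inside its ring, one $a$-neighbour in the direction of the tail (``down''), and two $b$-neighbours (``up''), the latter being themselves $c$-adjacent as the two children of their common parent. Since all tiles are congruent copies of $B$ and are surrounded in the same way, the $1$-corona is identical about every tile, giving $N_1=1=2^0$. The crucial rigidity input is that the oriented $c$-edges break the reflection symmetry of the undecorated tile, so no non-trivial isometry fixes a tile together with its decoration; consequently an isometry carrying one decorated tile onto another is unique and, in particular, cannot be a reflection that interchanges a tile's two children. This is exactly what prevents the two options ``$T_{i-1}$ is the left child'' and ``$T_{i-1}$ is the right child'' of $T_i$ from being identified, and it is used throughout.

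For the inductive step I would describe $C_k(T)$ as the tail-spine $T=T_0,T_1,\ldots$ truncated at depth $k$, together with, at each ancestor $T_i$, the finite descendant subtree hanging off the sibling $S_i$ of $T_{i-1}$ (the other child of $T_i$), everything intersected with the radius-$k$ ball of the dual graph. The internal shape of every such descendant subtree is homogeneous, hence forced; the only datum that can vary is the placement, left or right, of each sibling subtree relative to its spine tile, and the placement at level $i$ is precisely the sign $s_i$. I would verify by a distance count in the dual graph that, within radius $k$, the placements actually witnessed are exactly those at levels $1,\ldots,k-1$: the deepest spine tile $T_k$ lies on the boundary of $C_k$ and carries no further tile inside the corona that could reveal its own placement. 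Thus passing from $C_{k-1}$ to $C_k$ adjoins only forced tiles apart from one genuinely new binary choice, the newly-witnessed sign $s_{k-1}$; by the chirality above the two choices yield non-congruent coronae, which gives the doubling $N_k=2\,N_{k-1}$.

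To finish I would check that both extensions already occur inside a single fixed B-tiling: from any tile there emanate $2^{k-1}$ upward horocyclic paths of length $k-1$ (as in the proof of Proposition \ref{poolinfinit}), and their endpoints realise all $2^{k-1}$ words $(s_1,\ldots,s_{k-1})$, so every class is non-empty. Combining the upper bound from the parametrization, the lower bound from chirality, and this realizability yields $N_k=2^{k-1}$. I expect the main obstacle to be the boundary bookkeeping in the inductive step: showing cleanly that exactly the signs $s_1,\ldots,s_{k-1}$ are detectable within radius $k$ (no more, no fewer), i.e. that enlarging the corona by one shell introduces precisely one new binary degree of freedom. This rests on an accurate computation of dual-graph distances from $T$ to the ancestors $T_i$ and to their sibling subtrees, together with the rigidity forced by the $c$-edge orientations; the homogeneity of the descendant subtrees is what makes all remaining choices disappear.
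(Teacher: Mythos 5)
Your overall strategy (parametrize $k$-coronae by initial segments of the descent sequence $s(T)$, prove homogeneity of the part of the corona lying above $T$, and check that all words are realized) is close to the paper's, but two of your key structural claims are false, and your final count of $2^{k-1}$ comes out right only because the two errors cancel. First, the rigidity claim is wrong: the oriented $c$-edges do \emph{not} break the mirror symmetry of the tile. The reflection of $B$ in the axis through the midpoint of the $a$-edge and the common endpoint of $b_1$ and $b_2$ swaps the two $c$-edges while preserving their orientations (both point from the $a$-side to the $b$-side), so the decorated tile is still mirror symmetric --- the paper's proof uses exactly this (``since $T$ and $C^-_k(T)$ are both mirror symmetric, this congruence can be realized by two isometries''). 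Consequently an orientation-reversing isometry does identify the corona with data $(s_1,\ldots,s_k)$ with the corona with data $(-s_1,\ldots,-s_k)$; the interchange of a tile's two children that you claim is impossible is precisely what this reflection performs.

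Second, your boundary bookkeeping errs in the opposite direction: $s_k$ \emph{is} detectable inside $C_k(T)$. The deepest spine tile $T_k$ is not attached to the corona through $T_{k-1}$ alone: the sibling $S_{k-1}$ of $T_{k-1}$ (the other tile sitting on $T_k$) is a $c$-neighbour of $T_{k-1}$, hence lies at dual-graph distance at most $k$ from $T$ and belongs to $C_k(T)$. The corona therefore records on which side of $T_{k-1}$ the tile $T_k$ protrudes, and comparing this with the side on which $T_{k-2}$ sits on $T_{k-1}$ makes the product $s_{k-1}s_k$ (indeed every product $s_is_j$) an isometry invariant. For $k=2$ one checks in the half-plane model that the $2$-coronae with $(s_1,s_2)=(1,1)$ and $(1,-1)$ are \emph{not} congruent (any isometry matching the central tiles is the identity or the reflection in the central tile's axis, and neither matches the two candidates for $T_2$), whereas $(1,1)$ and $(-1,-1)$ are congruent via that reflection. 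So the correct parametrization is by $(s_1,\ldots,s_k)$ modulo the global sign flip, giving $2^k/2=2^{k-1}$ classes; your parametrization by $(s_1,\ldots,s_{k-1})$ with all words pairwise inequivalent yields the same number only by a numerical coincidence. Your induction $N_k=2N_{k-1}$ is salvageable, but the new binary degree of freedom gained at step $k$ is the newly visible sign $s_k$ taken modulo the global flip, not an unpaired $s_{k-1}$, and justifying it needs the sibling-of-$T_{k-1}$ observation rather than the chirality claim.
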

\begin{proof} 
Let us enumerate all rings in a biinfinite way. Given a tile $T$, we
denote the ring containing $T$ by $\Rr_0$. All the
rings lying upward from $\Rr_0$ are enumerated as $\Rr_{-1}$,
$\Rr_{-2}$, $\Rr_{-3}, \ldots$. All rings going downward from
$\Rr_0$ are enumerated by positive integers $\Rr_1$, $\Rr_2$, $\Rr_3,
\ldots$.

It is easy to see that $C_k(T)\cap \Rr_i\neq \varnothing$ if and
only if $-k\leq i \leq k$. 
It is also easy to check that for any tiles $T$ and $T'\in \Rr_0$
the following complexes in $\T$
\[ C^-_k(T):=C_k(T)\cap (\bigcup_{i=-k}^0 \Rr_i ) \quad \mbox{and} 
\quad C^-_k(T'):=C_k(T')\cap (\bigcup_{i=-k}^0 \Rr_i )\]
are pairwise congruent. In other words: All $k$-coronae look the
same, if we only 
look at the rings $\Rr_0,\Rr_{-1},\Rr_{-2},\ldots$ (i.e., the
tiles 'above' $T$). Moreover, since  $T$ and $C^-_k(T)$ are both
mirror symmetric, this congruence can be realized by two
isometries both moving $T$ into $T'$. In addition, the same is
true also for any two tiles from a B-tiling.

The upper part of any corona is on one convex  side of a horocycle.
This part obviously contains a half plane cut of by a line
perpendicular to $\ell$. Thus,  all the tiles in $\T$  for any
$k\in \N$ have $k$-coronae whose larger parts are pairwise
congruent.

So, the upper part $C^-_k (T)$ of some $k$-corona $C_k(T)$ around
$T$ is uniquely determined independently of $T$. The entire  
corona $C_k(T)$ is determined completely (up to orientation)  by
the first $k$ members of the tail $t(T)$, or, what is equivalent, by 
the first numbers $s_1, s_2, \ldots , s_k$ in the sequence $s(T)$. Our 
next task is to show that in a given tiling for any finite
sequence of $k$ $\pm 1$'s there is a tile $T$ whose tail is
encoded exactly by this sequence.

Let us enumerate the tiles in ring $\Rr_0$ (the ring
containing $T$) by upper indices. Let $T^0:=T$ and let other tiles on
one side of $T^0$ be enumerated by positive $i$'s and on the other
side by negative $i$'s in a consecutive manner.

Let $s_k(T^0)=(s_1, s_2, \ldots , s_k)$. Then (see Figure
\ref{bpathstowers}) all
tiles $T^{2n}$ from ring $\Rr_0$ have the same first number $s_1$
in $s(T^{2n})$. And, in general, it is easy to see that every
$T^i\in \Rr_0$, where $i\equiv 0  \pmod{2^k} $,  has the same first
$k$ numbers in $s(T^i)$. It follows that in a biinfinite  sequence
$(T_i)\in \Rr_0$ each $2^k$-th tile has the same coronae encoded by
$s_k(T^0)$.

Consider in ring $\Rr_0$ a segment $T^0, T^1, T^2, \ldots \,
T^{2^k-1}$. Recall that $s_k(T^0)=(s_1, s_2, \ldots s_k)$. Now, one can
easily check that either $s_k(T^1)=(-s_1, s_2,s_3, \ldots s_k)$, or
$s_k(T^2)=(s_1, -s_2,s_3, \ldots s_k)$, compare for 
instance Figure \ref{bpathstowers}.

And, in general, take  $0\leq i\leq 2^k-1$. The number $i$ can be
represented in a unique way as
\[ i= \sum_{j=0}^{k-1}\delta_i 2^i, \]
where $\delta _i = 0 $ or 1. Then, as  one can easily  see,
\[ s_k(T^i)= \{(-1)^{\delta_i}s_i|\,  0\leq i\leq k \}. \]
Thus, any B-tiling contains $2^k$ different $k$-coronae, up
to orientation. \end{proof} 

Now we should emphasise that the number $N_k$ of coronae in a
B-tiling  is unbounded, as $k$ tends to infinity.
But in any crystallographic tiling
(independently on how it is defined, as with either compact or
cocompact  fundamental domain), the number of coronae classes is
always bounded: $N_k\leq m <\infty$, where $m$ is some fixed
number independent of $k$. Therefore we have obtained the following
result. 

\begin{thm} \label{bncryst}
All B-tilings are non-crystallographic.
\end{thm}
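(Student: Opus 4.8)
The plan is to show that all B-tilings are non-crystallographic by combining Proposition \ref{ncorona} with the general fact that crystallographic tilings have a uniformly bounded number of corona classes. The key idea is that crystallographicity imposes a strong finiteness constraint on the local combinatorial complexity of a tiling, and this constraint is violated by the exponential growth $N_k = 2^{k-1}$ established earlier.

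First I would recall what crystallographicity means, using the definition already given in the introduction: a tiling $\T$ is crystallographic if its symmetry group $\Sym(\T)$ acts with compact (equivalently, in the finite-volume convention of the footnote, cocompact) fundamental domain. The crucial consequence I would extract is that, for a crystallographic tiling, there are only finitely many orbits of tiles under $\Sym(\T)$. Indeed, a compact fundamental domain meets only finitely many tiles, and every tile is equivalent under the group to one of these; hence the number of congruence classes of tiles is finite, say bounded by $m$. Since the $k$-corona of a tile is determined by the tile together with its surrounding pattern, two tiles in the same symmetry orbit have congruent $k$-coronae for every $k$. Therefore $N_k \le m$ for all $k$, with $m$ independent of $k$.

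Next I would invoke Proposition \ref{ncorona}, which states that in any B-tiling the number of distinct $k$-coronae up to isometry is exactly $2^{k-1}$. Since $2^{k-1} \to \infty$ as $k \to \infty$, the sequence $N_k$ is unbounded. This directly contradicts the uniform bound $N_k \le m$ that crystallographicity would force. Hence no B-tiling can be crystallographic, which is the assertion of Theorem \ref{bncryst}.

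The main obstacle, and the step deserving the most care, is justifying cleanly that $\Sym(\T)$ having a compact fundamental domain implies finitely many tile orbits and hence a uniform corona bound. One must argue that a compact fundamental domain intersects only finitely many tiles (using that the tiles are compact with nonempty interior and locally finite, so only finitely many can meet a fixed compact set) and that a group element carrying tile $T$ to tile $T'$ also carries $C_k(T)$ to $C_k(T')$, making them congruent in the sense of Definition \ref{congrcoronae}. This reduces the number of corona classes to at most the number of tile orbits. Once this finiteness is secured, the contradiction with the exponential lower bound of Proposition \ref{ncorona} is immediate and requires no further computation.
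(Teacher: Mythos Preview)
Your proposal is correct and follows essentially the same route as the paper: both argue that Proposition~\ref{ncorona} gives $N_k = 2^{k-1} \to \infty$, while a crystallographic tiling must satisfy $N_k \le m$ for some fixed $m$ (since a compact fundamental domain meets only finitely many tiles and tiles in the same $\Sym(\T)$-orbit have congruent $k$-coronae). The paper states this bound without your explicit justification via tile orbits, so your write-up is in fact slightly more detailed than the original.
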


On the other side, this last theorem (in the context of Proposition
\ref{ncorona} and a local theory on crystallographic structures)
can  be considered as a consequence of the following theorem, which
appears in \cite{do1}. This theorem is a generalization of the Local
Theorem, see \cite{ddsg}.

\begin{thm} \label{crysthm}
  Let $\X^d$ be a Euclidean or hyperbolic space.
  A tiling $\T$ in $\X^d$ is crystallographic if and only if
  the following two conditions hold for some $k \ge 0$:
\begin{enumerate}
\item For the numbers $N_k$ of $k$-coronae in $\T$ holds: $N_{k+1} =
  N_k$, and $N_k$ is finite.
\item $S_{k+1}(i) = S_k(i)$ for $1 \le i \le N_k$,
\end{enumerate}
where $S_k(i)$ denotes the symmetry group of the $i$-th $k$-corona.
\end{thm}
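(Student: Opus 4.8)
The plan is to prove both implications, with the sufficiency direction carrying essentially all of the difficulty. Throughout I will use the rigidity of ambient isometries: an isometry of $\X^d$ is determined by its restriction to any full-dimensional set (equivalently, by the images of $d+1$ points in general position), hence already by its restriction to a single tile $C_0(T)=T$.

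First I would treat necessity. Suppose $\T$ is crystallographic, so $G:=\Sym(\T)$ has a compact fundamental domain $D$. Only finitely many tiles meet $D$, and every tile is $G$-equivalent to one of them, so $\T$ has finitely many tile orbits, say $r$. Tiles in a common orbit have congruent coronae of every order, whence $N_k\le r$ for all $k$. Since the $(k+1)$-corona determines the $k$-corona, the sequence $(N_k)$ is non-decreasing; being bounded, it stabilizes, which gives condition (1) for all large $k$. For condition (2) I would observe that restriction to the inner layer embeds $S_{k+1}(i)$ into $S_k(i)$ (injectively, by rigidity), and that each $S_k(i)$ is finite because point stabilizers in a crystallographic group are finite. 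A decreasing chain of finite groups stabilizes, so $S_{k+1}(i)=S_k(i)$ for every $i$ once $k$ is large. Taking $k$ past both thresholds yields both conditions.

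The core is sufficiency. Assuming (1) and (2) at level $k$, the key lemma I would establish is an extension step: if $g$ is an ambient isometry with $g(T)=T'$ and $g(C_k(T))=C_k(T')$, then already $g(C_{k+1}(T))=C_{k+1}(T')$. To prove it, note that condition (1), i.e. $N_{k+1}=N_k$, forces the restriction map on corona types to be a bijection, so $C_k(T)\cong C_k(T')$ implies $C_{k+1}(T)\cong C_{k+1}(T')$ via some ambient isometry $h$. Then $h^{-1}g$ restricts to a symmetry of $C_k(T)$, i.e. an element of $S_k$; by condition (2) it extends to a symmetry $\sigma$ of $C_{k+1}(T)$. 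The isometry $h\sigma$ maps $C_{k+1}(T)$ onto $C_{k+1}(T')$ and agrees with $g$ on the full-dimensional set $C_k(T)$, so by rigidity $h\sigma=g$, and therefore $g$ itself maps $C_{k+1}(T)$ onto $C_{k+1}(T')$.

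Iterating the extension lemma shows that such a $g$ satisfies $g(C_m(T))=C_m(T')$ for all $m\ge k$, hence $g(\T)=\T$, i.e. $g\in G$. Consequently, after fixing one representative tile for each of the $N_k$ corona types, every tile is carried to a representative by a global symmetry, so $\T$ has at most $N_k$ tile orbits; a union of one compact tile from each orbit is then a compact set meeting every orbit and provides a compact fundamental domain, so $\T$ is crystallographic. The hard part will be the extension lemma: making rigorous how the two conditions combine — the first supplying enough combinatorial data to grow the corona by one layer, the second supplying enough symmetry so that the \emph{given} isometry $g$, and not merely \emph{some} isometry, performs the growth — and checking that the rigidity argument closes the induction uniformly, including in the hyperbolic case.
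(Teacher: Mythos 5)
A preliminary remark: the paper does not prove Theorem \ref{crysthm} at all --- it is imported from \cite{do1}, and the surrounding text only observes that the necessity direction is easy. So your proposal cannot be checked against an in-paper argument and must stand on its own. Its skeleton is the standard proof of the Local Theorem, and the extension lemma you isolate (an isometry matching $k$-coronae and central tiles must match $(k+1)$-coronae) is exactly the right key step; your derivation of it from the bijectivity of the restriction map on corona classes, condition (2), and rigidity is sound.

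There are two gaps. The serious one is the sentence ``iterating the extension lemma shows that such a $g$ satisfies $g(C_m(T))=C_m(T')$ for all $m\ge k$.'' Your lemma is available only at level $k$, because its proof consumes $N_{k+1}=N_k$ and $S_{k+1}(i)=S_k(i)$, and the theorem gives you these equalities at no higher level; so you may not reapply the lemma to the pair $(C_{k+1},C_{k+2})$. The correct propagation goes through neighbours: from $g(C_{k+1}(T))=C_{k+1}(T')$ deduce that every tile $P$ adjacent to $T$ satisfies $g(C_k(P))=C_k(g(P))$, since $C_k(P)\subseteq C_{k+1}(T)$; apply the level-$k$ lemma to each such $P$; and use $C_{k+2}(T)=\bigcup_{P\in C_1(T)}C_{k+1}(P)$. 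An induction on $m$ along these lines shows that $g$ matches the $k$-coronae of all tiles, hence $g(\T)=\T$. Without this reformulation the induction does not close, and this is precisely the crux of the Local Theorem. The minor gap is in necessity: $S_k(i)$ is the symmetry group of a corona, which is not a priori a subgroup of $\Sym(\T)$ and need not be finite (the central tile could, e.g., be a round ball, making $S_0$ infinite). The chain $S_0(i)\supseteq S_1(i)\supseteq\cdots$ still stabilizes, but the reason is that these are closed subgroups of the compact stabilizer of a distinguished point of the central tile, and compact Lie groups satisfy the descending chain condition on closed subgroups --- not that point stabilizers in a crystallographic group are finite.
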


Note that Condition (2) in the theorem makes sense only when
Condition (1) is fulfilled.
In a B-tiling, condition (2) is violated for $k=0$, and
condition (1) is violated for all $k\geq 1$. Thus Theorem
\ref{bncryst} is a particular case of the Local Theorem.

But in fact, the necessity part  of the Local Theorem is trivial
in contrast to a valuable sufficiency part. Indeed, if for any
$k\in \N$ at least one condition does not hold, this means that
$N_k\rightarrow \infty$ as $k\rightarrow \infty$. The latter one
implies that the tiling under consideration is non-crystallographic.

\section{\bf B\"or\"oczky Tilings in $\H^{d+1}$} \label{h4}

Let us give the construction of a
$d+1$-dimensional prototile. The construction in Section
\ref{sec:intro} is a special case of this one, with $d=1$.
Throughout this section, we will make strong use of the fact that
any $d$-dimensional horosphere $E$ in $\H^{d+1}$ is isometric to the
Euclidean space $\E^d$, see for instance \cite[\S 4.7]{rat}. Whenever
we consider a subset $A$ of a horosphere $E$, we will freely switch
between regarding $A$ as a subset of $\E^d$ and a subset of $E$. 

Let $\ell$ be a line in
$\H^{d+1}$. Choose a horosphere $E_0$ orthogonal to $\ell$. Let
$\Box'$ be a $d$-dimensional cube in $E_0$, centred at $E_0 \cap \ell$.
Let $H_1, \ldots , H_{2d}$ be hyperbolic hyperplanes, orthogonal
to $E_0$, such that each $H_i$ contains one of the $2d$ distinct
$(d-1)$-faces of the cube $\Box'$. Denote by $H^+_i$ the halfspace
defined by $H_i$ which contains the cube $\Box'$, and let
$C_{d+1}:=\bigcap _{i=1}^{2d} H^+_i$. Note that the intersection
$C_{d+1}\cap E_0 =\Box'$. Let $E_1$ be 
another horosphere, such that $E_1$ is concentric with $E_0$, $E_1$ is
not contained in the convex hull of $E_0$, and the distance of $E_0$
and $E_1$ is $\ln 2$. Then, $C_{d+1}\cap E_1$ is a $d$-dimensional
cube  of edge-length 2. Without loss of generality,  let this  
cube in $E_1$ be $\Box:= \{x=(x_1, \ldots,x_d) \in E_1 \, | \, -1
\le x_i \le 1 \}$. Divide $\Box$ into $2^d$ cubes
\[ \Box_{(\sigma_1, \sigma_2,\ldots,\sigma_d)} = \{ x \in \Box \, | \,
x_i\sigma_i \ge 0 \}, \; \sigma_i \in \{-1,1\}. \]
Let $L_0$ be the layer between $E_0$ and $E_1$. A 
{\em B\"or\"oczky prototile} in $\H^{d+1}$ is defined as
\[ B:= C_{d+1}  \cap L_0. \]

By construction, the prototile $B$ has $2^d+2d+1$ facets: one
'lower' facet (a unit Euclidean cube in $E_0$, denoted as
$a$-facet), $2^d$ 'upper' facets (unit Euclidean cubes in $E_1$,
denoted as $b$-facets), and $2d$ aside facets, denoted as
$c$-facets. One should mention that the B\"or\"oczky prototile is not 
a convex polyhedron. Moreover, it is not a polyhedron at all.
Though we call them facets, the lower and upper facets are
Euclidean $d$-cubes isometrically embedded into hyperbolic space
and they do not lie in hyperbolic hyperplanes. The $c$-facets
lie in hyperbolic hyperplanes, but for $d\ge2$ they are also not
$d$-dimensional polyhedra, because  their boundaries do not
consist of  hyperbolic polyhedra of dimension $d-1$. A $c$-facet
of a $d+1$-dimensional B\"or\"oczky prototile is a point set
which lies in a hyperbolic  hyperplane $H$ and is contained
between the intersections $E_0\cap H$ and $E_1\cap H$. Since
the intersections are also horocycles of dimension $d-1$, the 
$c$-facets are B\"or\"oczky prototiles of dimension $d-1$.

\begin{prop} Prototile $B$ admits  a face-to-face tiling $T$ by its
  copies. \label{tiling} 
\end{prop}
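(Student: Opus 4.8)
The plan is to construct the tiling explicitly by exhibiting a group of isometries that carries the single prototile $B$ onto copies that fill $\H^{d+1}$ in a face-to-face manner, mimicking the two-dimensional construction from the introduction but upgraded to the higher-dimensional horosphere/layer picture. The key observation is that the horosphere $E_0$ is isometric to $\E^d$, so the combinatorics of the tiling within a single horospherical layer reduces to a familiar Euclidean cube tiling, and the only genuinely hyperbolic ingredient is how consecutive layers fit together.

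First I would pave the layer $L_0$ (between $E_0$ and $E_1$) by horocyclic translations. Concretely, let $\tau_1,\ldots,\tau_d$ be the $d$ commuting horocyclic turns fixing the common ideal point $\mathcal{O}$, chosen so that on $E_0$ they act as the integer-lattice translations of $\E^d$ by the standard basis vectors (each moving the centre cube $\Box'$ to an adjacent unit cube). The abelian group they generate, isomorphic to $\Z^d$, tiles the layer $L_0$ by copies $\tau^{\mathbf{m}}(B)$, $\mathbf{m}\in\Z^d$: on $E_0$ these copies are exactly the unit cubes of the integer lattice, so they cover $E_0$ without overlap, and since each tile is $C_{d+1}\cap L_0$ for a translate of $C_{d+1}$, they cover the whole layer. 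Here I would use the defining property that $E_0$ is orthogonal to $\ell$ and isometric to $\E^d$ to identify the horocyclic turns with Euclidean translations and to check that the $c$-facets of adjacent tiles match face-to-face.

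Next I would stack the layers. Let $\lambda$ be the translation along $\ell$ by distance $\ln 2$ moving $E_0$ to $E_1$ (hence $L_0$ to the next layer up). Because the distance between concentric horospheres is $\ln 2$, the top cube $\Box$ of edge length $2$ in $E_1$ matches, after applying $\lambda$, the geometry of the next layer's bottom horosphere, and the subdivision of $\Box$ into the $2^d$ subcubes $\Box_{(\sigma_1,\ldots,\sigma_d)}$ is precisely what is needed so that the $2^d$ $b$-facets of one tile abut the $a$-facets of $2^d$ tiles in the layer above. Thus the group generated by $\tau_1,\ldots,\tau_d$ together with $\lambda$ produces, via $\lambda^{j}\tau^{\mathbf{m}}(B)$, a covering of all of $\H^{d+1}$: the layers $\lambda^{j}(L_0)$ exhaust the space as $j$ ranges over $\Z$ because the horospheres $E_i$ foliate $\H^{d+1}$, and within each layer the $\tau$-translates pave it as above.

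The main obstacle, and the step deserving the most care, is verifying the \emph{face-to-face} property at the horospherical facets ($a$- and $b$-facets) across consecutive layers, where the tiling genuinely uses hyperbolic geometry rather than the flat picture inside a single horosphere. Concretely I must check that the exponential scaling of horospherical arc length under translation along $\ell$ (the factor of $2$ between $E_0$ and $E_1$ coming from $\chi=-1$ and distance $\ln 2$) makes a single $b$-facet of a lower tile decompose exactly into $a$-facets of tiles above, with matching boundaries, so that no facet of one tile meets a proper part of a facet of another. This is where the choice of $\Box$ with edge length $2$ and its subdivision into $2^d$ unit subcubes is forced, and confirming that these subcubes coincide isometrically with the $a$-facets (unit cubes in $E_1$ viewed as the bottom horosphere of the layer $\lambda(L_0)$) is the crux; once this local matching is established, the global face-to-face tiling follows immediately since every tile is a group image of $B$ and adjacent images meet correctly by construction.
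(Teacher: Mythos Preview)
Your proof is correct and follows essentially the same approach as the paper: first tile the layer $L_0$ using the abelian group (isomorphic to $\Z^d$) of parabolic turns that act as the integer lattice on $E_0$, then transport this pavement to every other layer via powers of the hyperbolic shift along $\ell$ by $\ln 2$. Your discussion of the face-to-face matching across horospheres is actually more explicit than the paper's, which simply asserts that the shifts $\tau^k$ move the layer pavement to the other layers and thereby yield a tiling.
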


\begin{proof}
Take a horosphere $E_0$ and a unit cube $\Box'\subset E_0$. There
are parabolic turns  $g_i$ ($i=1,2,\ldots , d$) of $\H^{d+1}$ such
that their restrictions $g_i|_{_{E_0}}$ on $E_0$ are translations
of $E_0$ along edges of $\Box '$. The $g_i$ ($i=1,2,\ldots , d$)
span an Abelian group $G$ isomorphic to $\Z^d$. The orbit of the
B\"or\"oczky prototile under $G$ is a pavement of the layer between
$E_0$ and $E_1$ by copies of $B$.

Let $\tau$ be a shift of $\H^{d+1}$ along the line $\ell$ moving $E_0$
to $E_1$. Then, the shifts $\tau^ k$, $k\in \Z$, move the pavement of
the mentioned layer into all other layers, resulting in a tiling of
the whole space.
\end{proof}

We call any face-to-face tiling of $\H^{d+1}$ with prototile $B$ a 
{\em B\"or\"oczky tiling}, or shortly {\em B-tiling}.
Analogously to $\H^2$,  we call $a$- and $b$-facets of $B$  
{\em horospheric} facets. We should emphasise that if some horosphere
$E_0$ is fixed, then a unique sequence $(E_i)_{i \in \Z}$ of
horospheres orthogonal to $\ell$ is induced, provided the distance
along $\ell$ between any two consecutive horospheres $E_i$ and
$E_{i+1}$ is $\ln 2$. By construction of a B-tiling,
all $a$- and $b$-facets induce a tiling of each horosphere by
pairwise parallel unit cubes.

Let us state the definition of an analogue of a two-dimensional ring.
\begin{definition} 
Let $\T$ be a B-tiling in $\H^{d+1}$, and let $L_i$ be a layer of space
between horospheres $E_i$ and $E_{i+1}$. The set $\Rr_i$ of all tiles
from $\T$ lying in $L_i$ is called a {\em layer (of the tiling)}. 
\end{definition}
It is obvious that two tiles belong to one layer if and only if
there is some path $T_i$, $i\in I$, such that for any $i, i+1 \in I$,
the tiles $T_i$ and $T_{i+1}$ have a $c$-facet in common.

Let $I$ be a segment of $\Z$ of consecutive numbers. In analogy to
Section \ref{h2}, a sequence $(T_i)_{i \in I}$  of tiles is called
a horospheric path, if for all $i,i+1 \in I$ the tiles $T_i$ and
$T_{i+1}$ share a horospheric facet, and $T_i \ne T_j$ if $i\ne
j$. By construction, the single $a$-facet of any tile in some
B\"or\"oczky tiling in $\H^{d+1}$ coincides with one of $2^d$
$b$-facets of an adjacent tile.  To describe horospheric paths by
sequences, we use the alphabet
\[ {\mathcal A} = \{ \sigma \, | \, \sigma=(\sigma_1, \sigma_2, \ldots,
\sigma_d), \sigma_i = \pm 1 \}, \, |\mathcal A|=2^d \]

First of all, we emphasise that for  any tile  $B'$ from a
B-tiling, a natural bijection between its $2^d$ $b$-facets and the 
elements of $\mathcal{A}$ is uniquely determined, provided such a
bijection (between $b$-facets and elements of $\mathcal{A}$) is
already established for some tile $B$. Indeed, assume that a tile
$B'$ lies in the $i$-th layer $\Rr_i$. Then the bijection for $B$ is
canonically carried  by a single shift $\tau^i$ of the tile $B$
along $\ell$ into $\Rr_i$, followed by an appropriate translation
inside the layer $\Rr_i$.

Analogous to the two-dimensional case, any horospheric path 
$(T_i)_{i \in I}$ of the form down-down-$\cdots$  gives rise to a word 
$s(T)=(\sigma^{(i)})_{i \in  I}$ over  ${\mathcal A}$ in the following
way: If an $a$-facet of $T_i$ lies on a $b$-facet  of $T_{i+1}$, which
corresponds to the cube $\Box_{(\sigma_1,\ldots,\sigma_d)}$, we set
$\sigma^{(i)}:= (\sigma_1,\ldots,\sigma_d)$.

As in Section \ref{h2}, the tail $t(T)$ of a tile $T$ is the
unique infinite horospheric path $(T=T_0,T_1,\ldots)$ beginning in
$T$ of the form down-down-down-$\cdots$.

The definition of a pool and of the equivalence of tails goes exactly as
in Definition \ref{defpool} and Definition \ref{defequi} (replace
'horocyclic' by 'horospheric'). Moreover, the proofs of Propositions
\ref{noupdown} and \ref{poolinfinit} work in any dimension, so both
propositions are valid here. 

\begin{definition} For a tile $T \in \T$, let $W(T)$ be the set of all
tiles $T'$, such that $T$ can be linked with $T'$ by a horospheric
path $T,\ldots, T'$ of the form up-up-$\cdots$. We call $W(T)$ a 
{\em  tower}, or the {\em tower on $T$}. 
\end{definition}

For an example of a tower in $\H^2$, see Figure \ref{bpathstowers}.

\begin{prop}  \label {pooltower}
Let $\P \subset \T$ be a pool, $T \in \P$ and $t(T)=(T
=T_0,T_1,T_2,\ldots)$. Then 
 \[ W(T_i) \subset   W(T_{i+1}) \quad \mbox{and}  \]
\begin{equation} \label{eq:pooltower}
 \P = \bigcup_{T' \in   t(T)} W(T') = W(T_0) \cup W(T_1) \cup W(T_2)
\cup   \cdots.
\end{equation}
\end{prop}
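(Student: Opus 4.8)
The claim has two parts: the nesting $W(T_i) \subset W(T_{i+1})$ of consecutive towers along a tail, and the identification of the pool $\P$ as the union of these towers. The plan is to prove the nesting first, since the union formula will follow by combining nesting with the characterization of pools via tails (Proposition \ref{pooltail}) and with the down-down-up-up structure of horospheric paths (Proposition \ref{noupdown}).

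**The nesting $W(T_i) \subset W(T_{i+1})$.** Recall that $T_{i+1}$ is the unique tile reached from $T_i$ by going \emph{down} (across the $a$-facet of $T_i$), so equivalently $T_i$ is reached from $T_{i+1}$ by going \emph{up}. Now take any $T' \in W(T_i)$, so there is a horospheric path from $T_i$ to $T'$ consisting only of ups. Prepend the single up-step $T_{i+1} \to T_i$ to this path; I would check that the result is a legitimate up-up-$\cdots$ path from $T_{i+1}$ to $T'$, hence $T' \in W(T_{i+1})$. The only subtlety is that prepending a step could create a repeated tile, violating the no-loop requirement; but since $T_i$ sits one layer below $T_{i+1}$ and every up-step moves strictly upward in the layer indexing $\Rr_i$, the concatenated path visits a strictly increasing sequence of layers and so cannot revisit a tile. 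This establishes the nesting.

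**The union formula.** For the inclusion $\bigcup_{T' \in t(T)} W(T') \subseteq \P$: every $T_i \in t(T)$ lies in $\P$ (as noted in the text, the whole tail belongs to the pool), and each tower $W(T_i)$ is reachable from $T_i$ by a horospheric path, so $W(T_i) \subseteq \P$ because pool-membership is exactly the relation of being linked by a horospheric path (Definition \ref{defpool}). For the reverse inclusion $\P \subseteq \bigcup_i W(T_i)$, I would take an arbitrary $T'' \in \P$ and produce a horospheric path $T = T_0, \ldots, T''$. By Proposition \ref{noupdown}, after possibly splitting, any such path has the form down-$\cdots$-down-up-$\cdots$-up; moreover, going down from $T$ is forced and follows the tail, so I may assume the descending portion runs along $t(T)$ and reaches some tile $T_m \in t(T)$, from which the remaining steps are all ups. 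Then by definition $T'' \in W(T_m)$, placing $T''$ in the union.

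**Main obstacle.** The delicate point is the last inclusion: given an arbitrary horospheric path linking $T$ to $T''$, I must argue it can be \emph{rewritten} so that its descending part coincides with an initial segment of the tail $t(T)$. This is where Proposition \ref{noupdown} does the real work --- it guarantees the up/down pattern is monotone (all downs, then all ups) --- combined with the uniqueness of the down-step (there is only one way to go down from any tile, so the descending part of \emph{any} such path is literally an initial segment of $t(T)$). Verifying that these two facts genuinely force the path into the required normal form, without hidden case distinctions about where ``up'' and ``down'' can interleave near the endpoints, is the step I would write most carefully; the nesting and the forward inclusion are comparatively routine.
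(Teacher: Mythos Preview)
Your proof is correct and follows essentially the same approach as the paper's, which also proves the nesting by prepending the up-step $T_{i+1}\to T_i$ and proves the reverse inclusion by invoking Proposition~\ref{noupdown} and then splitting into the three cases (all ups, all downs, down-then-up) that you have collapsed into one. The ``main obstacle'' you flag is not really an obstacle: no rewriting is needed, since Proposition~\ref{noupdown} already gives the path in down-then-up form and uniqueness of the down-step forces the descending segment to \emph{be} an initial segment of $t(T)$---the paper simply asserts this without further comment.
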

\begin{proof}
Since a horospheric path $T_{i+1}, T_i$ is of the form 'up', any tile 
$T'\in W(T_i)$ can be linked with $T_{i+1}$ by a path of the form
up-up-$\cdots$, namely $T_{i+1}, \ldots, T'$. This implies $ W(T_i)
\subset  W(T_{i+1})$. 

Of course, all tiles $T' \in t(T)$ are in the same pool as $T$
because they are connected by a horospheric patch in $t(T)$. All
tiles in $W(T')$ are in the same pool as $T'$, therefore in the
same pool as $T$. This shows $\P \supseteq \bigcup_{T' \in t(T)}
W(T')$.

Let $T'' \in \P$. Then, by definition of a pool, exists a
horospheric path connecting $T$ and $T''$. By Proposition
\ref{noupdown}, this path is either of the form up-up-$\cdots$, or
down-down-$\cdots$, or down-down-$\cdots$-down-up-$\cdots$-up.

If this path $(T, \ldots, T'')$ is up-up-$\cdots$, then $T''\in W(T_i)$
for any $T_i\in t(T)$, thus $T''$ is contained in the right hand side
of \eqref{eq:pooltower}.

If the path $(T, \ldots, T'')$ is down-down-$\cdots$, then 
$T'' \in t(T)$, thus $T''$ is contained in the right hand side
of \eqref{eq:pooltower}.

If the path $(T, \ldots, T'')$ is down-down-$\cdots$-down-up-$\cdots$-up,
then this path contains a tile $\tilde{T}$, such that the path 
$(T, \ldots,  T'')$ consists of two paths: a down-down-$\cdots$-down path
$(T,\ldots,\tilde{T})$, and an up-up-$\cdots$ path 
$(\tilde{T},\ldots, T'')$. From here it follows that $\tilde{T}\in
t(T)$ and $T''\in W(\tilde{T})$, thus $T''$ is contained in the right
hand side of \eqref{eq:pooltower}.
\end{proof}

Let us now investigate the structure of a tower and a pool,
respectivley. Consider a pool $\P \subseteq \T$. Fix some horosphere 
$E_0$ and let a tile $T \in \P$ have  its $b$-facets  in
$E_0$. Obviously, the intersection
\[ u^{}_0(T):= \supp(W(T)) \cap E_0 \]
is exactly the union of  $b$-facets (which are unit cubes)  of
$T$. (Here, $\supp(W(T))$ denotes the support of $W(T)$, that is, the
union of all tiles in $W(T)$). 
This union is a $d$-dimensional cube $C_2$ of edge length 2.
We introduce in $E_0$, as in Euclidean $d$-space, a Cartesian
coordinate system with axes parallel to the edges of $C_2$.
Therefore, there are $a_i^{(0)}, b_i^{(0)} \in \R$, such that
\[ u^{}_0(T)= \{ x \in E_0 \, | \, a_i^{(0)} \le x_i \le b_i^{(0)}
\} \]
where $b_i^{(0)} - a_i^{(0)} =2$.
Consider the tail $t(T)=(T=T_0,T_1,T_2,\ldots)$ and let
\[ u_0(T_j):= \supp(W(T_j)) \cap E_0, \quad (T_j\in t(T_0)) \]

Let us calculate a representation of $u_0(T_{j+1})$ in terms of
$u_0(T_j)$ and the biinfinite word $(\sigma_i)_{i \in I}$ encoding
$t(T)$. The intersection $u^{}_0(T_j)=\supp(W(T_j)) \cap E_0$ is a
$d$-cube
\[ u^{}_0(T_j)= \{ x \in E_0 \, | \, a_i^{(j)} \le x_i \le b_i^{(j)}
\} \] 
of edge-length $2^{j+1}$: $b_i^{(j)} - a_i^{(j)} =2^{j+1}$.
Now we  calculate coordinates of the section $u^{}_0(T_{j+1})$.
Since $W(T_j) \subseteq W(T_{j+1})$, we have $a_i^{(j+1)} \le
a_i^{(j)}$ and $b_i^{(j)} \le b_i^{(j+1)}$. Moreover, 
if  in an infinite word $s(T)=(\sigma^{(1)}, \sigma^{(2)},
\ldots)$ holds $\sigma_i^{(j+1)}=-1$, then
$a_i^{(j+1)}=a_i^{(j)}-2^{j+1}$, $b_i^{(j+1)}=b_i^{(j)}$. If
$\sigma_i^{(j+1)}=1$, then $a_i^{(j+1)}=a_i^{(j)}$ and
$b_i^{(j+1)}=b_i^{(j)}+2^{j+1}$. Altogether we obtain
\begin{equation}
  a_i^{(j+1)} = a_i^{(j)}+ \frac{\sigma_i^{(j+1)}-1}{2} ( b_i^{(j)}
- a_i^{(j)}),  \label{aik}
\end{equation}
\begin{equation}
  b_i^{(j+1)} = b_i^{(j)}+ \frac{\sigma_i^{(j+1)}+1}{2} ( b_i^{(j)}
- a_i^{(j)}).  \label{bik}
\end{equation}
Note again, that from (\ref{aik}) and (\ref{bik}), in particular,
it follows that  the intersection of the tower $W(T_{j})$ with the
horosphere $E_0$ is a $d$-cube of edge length $2^{j+1}$.
\begin{thm} \label{2kpools}
  In a B-tiling in $\H^{d+1}$ the following
  properties hold:
  \begin{enumerate}
  \item The number of pools is $2^k$ for some $0 \le k \le d$.
  \item For any $d$ and any $0 \le k \le d$, there are
    B-tilings in $\H^{d+1}$ with $2^k$ pools.
  \item Given $0\leq k\leq d$, in all B-tilings in $\H^{d+1}$ with $2^k$ pools
  the supports of all pools are pairwise congruent to each other.
  \item Given a B-tiling $T$, all pools
    are pairwise congruent to each other with respect to tiles.
  \item All $2^k$ pools in a $B$-tiling share a common
    $(d-k+1)$-plane.
  \end{enumerate}
\end{thm}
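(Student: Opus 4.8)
The plan is to reduce everything to the combinatorics of tail words and then to a group action of $(\Z/2)^d$ by sign changes. By Proposition \ref{pooltail} the pools are exactly the cofinal equivalence classes of tail words $s(T)\in\mathcal A^{\N}$, so I first describe all words that occur. Fix the horosphere $E_0$, identify its induced unit-cube tiling with $\Z^d$, write $T^z$ for the tile of $\Rr_0$ over $z\in\Z^d$, and put $\tau:=s(T^0)$, the reference tail. The heart of the argument is the structural formula
\begin{equation}\label{signflip}
 s(T^z)\ \sim\ \varepsilon(z)\cdot\tau,\qquad \varepsilon(z)=(\varepsilon_1,\dots,\varepsilon_d)\in\{-1,1\}^d,
\end{equation}
where $(\varepsilon\cdot\tau)^{(j)}_i=\varepsilon_i\,\tau^{(j)}_i$ and $\varepsilon_i(z)=+1$ for $z_i\ge 0$, $=-1$ for $z_i<0$ (the eventual binary sign of $z_i$). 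This is the coordinatewise, $(d{+}1)$-dimensional analogue of Proposition \ref{ncorona}: going down one layer, the merging in each coordinate is an independent pairing of $\Z$, so $\sigma^{(j)}_i(T^z)$ differs from $\sigma^{(j)}_i(T^0)$ precisely by the $j$-th binary digit of $z_i$; for fixed $z$ these digits stabilize, and the maximal stabilization level over the $d$ coordinates furnishes the single common shift that cofinal equivalence of vector words requires. Since every $\varepsilon\in\{-1,1\}^d$ is realized by some $z$, the set of pools is exactly $\{[\varepsilon\cdot\tau]:\varepsilon\in\{-1,1\}^d\}$.

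Now (1) is pure group theory. The group $(\Z/2)^d\cong\{-1,1\}^d$ acts on words by the sign changes $\varepsilon\cdot(-)$; this action commutes with the shift and so descends to cofinal classes. The number of pools is the orbit size of $[\tau]$, i.e.\ the index of the stabilizer $\Sigma=\{\varepsilon:\varepsilon\cdot\tau\sim\tau\}$. As sign change is a group action compatible with $\sim$, $\Sigma$ is a subgroup of $(\Z/2)^d$, whence the number of pools is $[(\Z/2)^d:\Sigma]=2^k$ with $k=d-\dim_{\mathbb F_2}\Sigma\in\{0,\dots,d\}$, proving (1). To pin down $k$ geometrically I use that distinct pools have disjoint supports. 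By \eqref{aik}--\eqref{bik} the support $\P\cap E_0$ of the class $[\varepsilon\cdot\tau]$ is the box that is a half-line in each coordinate $i$ for which $\tau_i$ is eventually constant (direction $\varepsilon_i\tau_i$) and all of $\R$ in each coordinate for which $\tau_i$ oscillates. Two classes differing only in oscillating coordinates share the same box, so by disjointness they coincide; conversely distinct half-line directions give disjoint boxes. Hence the pool depends only on the signs in the eventually-constant coordinates, $\Sigma$ is exactly the group of sign changes supported on the oscillating coordinates, and $k$ equals the number $r$ of eventually-constant coordinates. This also fixes the orthant picture used below: each of the $2^k$ supports is a product of $k$ half-lines and $d-k$ lines.

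For the remaining items I read off consequences of this orthant description. Existence (2): taking offsets with $\tau_i=(+,+,\dots)$ for $i\le k$ and $\tau_i=(+,-,+,-,\dots)$ for $i>k$ gives $r=k$, since the first $k$ coordinates split while the others satisfy $\tau_i\sim-\tau_i$; thus $\Sigma=\{\varepsilon:\varepsilon_i=+1,\ i\le k\}$ and there are $2^k$ pools. Congruence of supports (3): every support is an orthant-box with exactly $k$ half-line and $d-k$ line factors, and any two are interchanged by the coordinate reflections and horospheric translations of $\H^{d+1}$, which are isometries; this is uniform over all tilings with $2^k$ pools. Tile-respecting congruence (4): the reflection of $\H^{d+1}$ in the hyperbolic hyperplane through $\mathcal O$ over the grid-wall $\{x_i=c_i\}$ of a split coordinate is a symmetry of the tiling—here one uses the mirror symmetry of the prototile $B$ in the $i$-th coordinate, exactly as in the glide-reflection discussion after Theorem \ref{thm2dim}—and these reflections generate a group acting simply transitively on the $2^k$ sign patterns, hence carry pools to pools tile-by-tile. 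Common plane (5): every closed box contains the corner flat $\{x_i=c_i:\tau_i\text{ eventually constant}\}$, a $(d-k)$-flat in $E_0$; together with the line direction toward $\mathcal O$ it spans a hyperbolic $(d-k+1)$-plane lying in the closure of every pool.

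The step I expect to be the main obstacle is the structural formula \eqref{signflip}: checking that the down-merging genuinely factors, layer by layer, into $d$ independent coordinate pairings of $\Z$, so that the tail of $T^z$ is the reference tail with signs flipped according to the binary digits of $z$, uniformly enough to provide one common cofinal shift for all coordinates at once. Once this is in place, parts (1)--(5) are bookkeeping around the $(\Z/2)^d$-action and the explicit orthant boxes.
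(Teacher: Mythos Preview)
Your algebraic framing via the $(\Z/2)^d$-action on cofinal classes is attractive, but the orbit-stabilizer step has a genuine gap that makes the count come out wrong in general. The issue is twofold: the structural formula \eqref{signflip} does not hold with the stated $\varepsilon(z)$, and --- more seriously --- the orbit of $[\tau]$ under sign changes can be strictly larger than the set of pools.

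Concretely, take $d=2$ and $\tau_1=\tau_2=(+,-,+,-,\dots)$. Both coordinates oscillate, so by \eqref{aik}--\eqref{bik} the support of the pool of $T^0$ is all of $E_0$; hence there is exactly one pool. But the stabilizer is $\Sigma=\{(+,+),(-,-)\}$: for $\varepsilon=(-1,+1)$ one needs a single pair $(m,n)$ making coordinate~1 flip sign (forcing $m-n$ odd) while coordinate~2 stays fixed (forcing $m-n$ even), which is impossible; whereas $\varepsilon=(-1,-1)$ lies in $\Sigma$ via a shift by one. So the orbit has size $4/|\Sigma|=2\neq 1$. The extra orbit element $[(-\tau_1,\tau_2)]$ is simply not the tail word of any tile in $\T$. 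Your support argument cannot rescue this, because it presupposes that $\varepsilon\cdot\tau$ is realized in order to speak of ``its'' box; the disjointness-of-pools step does not apply to a cofinal class that contains no $s(T)$.

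The underlying error in \eqref{signflip} is the assertion that $\varepsilon_i(z)=-1$ for $z_i<0$. In an oscillating coordinate the fiber in $\Z$ containing $0$ grows to all of $\Z$, so \emph{every} $T^z$ eventually merges with $T^0$ in that coordinate and $s(T^z)_i$ is eventually \emph{equal} to $\tau_i$, not $-\tau_i$; for $\tau_i=(+,+,-,+,+,-,\dots)$ one even has $-\tau_i\not\sim\tau_i$, so the claimed $\varepsilon_i=-1$ is never realized there. The correct statement is: $s(T^z)_i$ eventually equals $\tau_i$ in every oscillating coordinate, and equals $\pm\tau_i$ in each eventually-constant coordinate according to which side of the persisting wall $z_i$ lies on. This already gives exactly $2^r$ distinct cofinal classes (distinctness is immediate from the eventually-constant tails), and these exhaust the pools --- no stabilizer computation is needed.

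By comparison, the paper's proof avoids this trap by working geometrically first: it computes the support of a pool as an ascending union of cubes via \eqref{aik}--\eqref{bik}, reads off which coordinates are bounded on one side, and shows that any boundary hyperplane between pools is a totally separating reflection wall; the tail-word structure (Corollary~\ref{alltails}) is then a \emph{consequence}, not an input. Your route can be made to work, but only after replacing the orbit count by the direct characterization of realized tails above. Once that is done, parts (2)--(5) proceed essentially as you wrote them and coincide with the paper's reflection-and-orthant arguments.
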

\begin{proof}
Let $\P$ be a pool in a B-tiling $\T$. By Proposition
\ref{pooltower}, $\P$ is the union of towers $(W(T_j))_{j \in
\N}$, where $W(T_j) \subset W(T_{j+1})$. Let $E_0$ be a horosphere
that contains some $b$-facet $b \subset T \in \P$. Since the
intersection of a tower and $E_0$ is a $d$-cube, the intersection
of $\P$ and $E_0$ is the union of countably many $d$-cubes
$\{\Box_j\}_{j \ge 0}$, where $\Box_j \subset \Box_{j+1}$.

Let $a_i^{(j)}, b_i^{(j)}$ as in (\ref{aik}),(\ref{bik})
correspond to $\P = \bigcup_{j \ge 0} W(T_j)$. From (\ref{aik})
and (\ref{bik}) we read off: if there are only finitely many $k$,
such that $\sigma_i^{(j)}=-1$, then there is a sharp lower bound
$a$ for $a_i^{(j)}$. Therefore, in this case the union $\bigcup_{j
\ge 0} \Box_j$ (where $\Box_j=W(T_j) \cap E_0$) is contained
in the half-space $H^+_i=\{ x=(x_1,\ldots,x_d) \in E_0=\E^d \, | \,
x_i \ge a \}$.

Analogously, if there are only finitely many $j$, such that
$\sigma_i^{(j)}=1$, then there is an upper bound $b$ for
$b_i^{(j)}$. In this case $\bigcup_{j \ge 0} \Box_j$ is contained
in the half-space $\tilde{H}^-_i=\{ x=(x_1,\ldots,x_d) \in \E^d \, |
\, x_i \le b \}$. We obtain:

{\bf (A)} For any fixed coordinate $i$, $\bigcup_{j \ge 0} \Box_j$ is
either unbounded in one direction or unbounded in both directions.

Assume  a  hyperplane $\bar{h}$ is the boundary
(at least partly) of a pool $\P$ in $\T$. Consider a
tile with a $c$-facet on $\bar{h}$. All tiles $T_j\in t(T)$ have
$c$-facets in the hyperplane $\bar{h}$. Now, since $\T$ is face-to-face, 
the reflection $\tau$ in the hyperplane $\bar{h}$ moves $T$ to some
tile $T'$ which shares with $T$ a common $c$-facet. Note 
that, by assumption, $T'$ belongs to another pool $\P'$. The tail
$t(T)$ moves under $\tau$ into $t(T')$. Therefore, the pool $\P$
moves under $\tau$ into pool $\P'$ too. And, in particular, the tail
of $T'$ --- determining $\T \cap \P'$ uniquely --- is obtained from
$t(T)$ by a reflection. Thus $s(T)$ differs from $s(T')$ only by
exchanging the sign in one coordinate.

Now, since $\T$ is face-to-face, the hyperplane $\bar{h}$,
which separates two pools, is a totally separating hyperplane.
Indeed, if $\bar{h}$ contains a common facet of two tiles in a
horospheric layer, it cannot dissect any other tile in the same
layer. Since, for being (part of) a boundary of two pools, $\bar{h}$
separates some adjacent tiles in any layer, it cannot dissect a
tile in tiling $\T$ at all. Therefore, for two tiles $T$ and $T'$
lying on different sides of $\bar{h}$, their tails have no tiles in
common. 

Now, by {\bf (A)}, and since any hyperplane in the boundary of some
pool is parallel to some $c$-facet, the hyperplanes in $\partial \P$
are pairwise orthogonal to each other. Thus they partition 
$\H^{d+1}$ into $2^k$ pools, proving the first point of the
theorem. It is a simple exercise to construct B-tilings with $2^k$
pools for all $0 \le k \le d$. In fact, one can use sequences $s(T)$,
where the numbers of $1$s in exactly $k$ coordinates is finite. 
This proves the second point of the theorem. Being pairwise orthogonal
to each other, all the $k$ hyperplanes share a common
$d-k+1$-dimensional plane which is also orthogonal to the
horospheres, proving the fifth point of the theorem.

Now we know the structure of the intersection of the pool $\P$
with the horosphere $E$:
\[ \P\cap E= \E^m \oplus E^{+\,(d-m)},   \]
where $E^{+\,(d-m)}$ denotes a $d-m$-dimensional 'octant', i.e,
the sum $ \R^+ \oplus \cdots \oplus \R^+$ of $d-m$
half-lines. Therefore, in all B-tilings with $2^k$ pools, all supports
of the pools are the same, which proves the third point of the
theorem.  Note that, up to here, two pools may have congruent
supports, but can be pairwise different as tilings.

Now we prove that all $2^k$ in some given B-tiling $\T$ are pairwise
congruent with respect  to tiles. We did prove it already for two
pools having some $c$-facet in common. But any two pools can
be linked by a chain of pools in which all sequel pools have
$d$-dimensional boundary in common. Therefore, the fourth point of
the theorem is proved.
\end{proof}

Let us emphasise that point (4) of Theorem \ref{2kpools} implies that
all tails in a given B-tiling are cofinally equivalent, up to
multiplying entire coordinates (that is, entire sequences
$(\sigma^{(j)}_i)_{j \in \N}$) by $-1$. This is stated precisely in the
following corollary.
\begin{corollary} \label{alltails}
Let $\T$ be a B-tiling, and let $T,\tilde{T} \in \T$. Denote their
sequences by $s(T)=(\sigma^{(j)}_1, \ldots, \sigma^{(j)}_d)_{j \in
  \N}$, $s(\tilde{T})=(\tilde{\sigma}^{(j)}_1, \ldots,
\tilde{\sigma}^{(j)}_d)_{j \in \N}$. Then there are $m,n \in \N$ such
that for each $i \le d$ holds   
\[ \forall j \in \N: \; \sigma^{(j+m)}_i =  \tilde{\sigma}^{(j+n)}_i
\quad \mbox{or} \quad  \forall j \in \N: \; \sigma^{(j+m)}_i =
- \tilde{\sigma}^{(j+n)}_i. \]
If $T$ and $\tilde{T}$ are contained in the same layer, then 
there is $m \in \N$ such that 
\[ \forall j \ge m: \; \sigma^{(j)}_i =  \tilde{\sigma}^{(j)}_i
\quad \mbox{or} \quad  \forall j \ge m: \; \sigma^{(j)}_i =
- \tilde{\sigma}^{(j)}_i. \]
\end{corollary}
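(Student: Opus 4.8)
The plan is to reduce everything to point (4) of Theorem~\ref{2kpools} together with the tail-characterisation of pools (the $\H^{d+1}$-analogue of Proposition~\ref{pooltail}, namely that two tiles lie in a common pool if and only if their tails are cofinally equivalent). First I would record how the separating reflections act on the encoding sequences. As shown in the proof of Theorem~\ref{2kpools}, every hyperplane bounding a pool is parallel to a $c$-facet, hence orthogonal to all horospheres $E_i$ and passing through the ideal point $\mathcal{O}$; the reflection in such a hyperplane therefore maps each horosphere, and hence each layer $\Rr_i$, to itself, while flipping a single coordinate direction, say the $i$-th. On the cube subdivision $\Box_{(\sigma_1,\ldots,\sigma_d)}$ this reflection interchanges the $b$-facet labelled $\sigma$ with the one labelled $\sigma'$, where $\sigma'_i=-\sigma_i$ and $\sigma'_\ell=\sigma_\ell$ for $\ell\neq i$. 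Consequently, if $T'$ is the reflected image of $T$, then the letters of $s(T')$ are those of $s(T)$ with the sign of the $i$-th coordinate reversed throughout.

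Next I would invoke point (4): any two pools $\P$ and $\tilde\P$ are congruent with respect to tiles, the congruence being a composition $\phi$ of reflections in the pairwise orthogonal separating hyperplanes. By the previous step, $\phi$ reverses the signs of exactly the coordinates in some subset $S\subseteq\{1,\ldots,d\}$, uniformly in every letter. Writing $T^\ast:=\phi(T)\in\tilde\P$ and denoting the letters of $s(T^\ast)$ by $\sigma^{\ast(j)}_i$, we then have $\sigma^{\ast(j)}_i=-\sigma^{(j)}_i$ for $i\in S$ and $\sigma^{\ast(j)}_i=\sigma^{(j)}_i$ for $i\notin S$, for all $j\in\N$.

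Now take $T,\tilde T$ arbitrary, lying in pools $\P,\tilde\P$, and form $T^\ast=\phi(T)$ as above. Since $T^\ast$ and $\tilde T$ lie in the common pool $\tilde\P$, their tails are cofinally equivalent, so there are $m,n\in\N$ with $T^\ast_{m+j}=\tilde T_{n+j}$ for all $j$. Because the labelling of $b$-facets is propagated canonically across the whole tiling (as explained before Theorem~\ref{2kpools}), this tail identity yields the letterwise equality $\sigma^{\ast(m+j)}_i=\tilde\sigma^{(n+j)}_i$ for all $i$ and $j$. Combining with the sign relation of the previous paragraph gives, for each $i\le d$, that $\sigma^{(j+m)}_i=\tilde\sigma^{(j+n)}_i$ for all $j$ if $i\notin S$, and $\sigma^{(j+m)}_i=-\tilde\sigma^{(j+n)}_i$ for all $j$ if $i\in S$, which is exactly the claimed dichotomy. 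For the same-layer refinement, I would use that every separating reflection preserves layers, so $T^\ast$ lies in the same layer as $T$, hence as $\tilde T$; since the tail of a tile passes downward through the layers in step with its index, the identity $T^\ast_{m+j}=\tilde T_{n+j}$ between tails based in one layer forces $m=n$, giving the single shift in the second assertion.

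The hard part is really only the bookkeeping in the first paragraph: verifying that the separating reflections are precisely the coordinate sign-flips and that they fix every horosphere and layer. Once that is in hand, the corollary is just the composition of this observation with the intra-pool cofinal equivalence supplied by the tail-characterisation of pools, together with the elementary remark that tails based in a common layer must synchronise with equal shifts.
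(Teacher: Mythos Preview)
Your proof is correct and follows essentially the same route as the paper's: invoke point (4) of Theorem~\ref{2kpools} to transport $T$ into the pool of $\tilde T$ by a composition of the separating reflections (which act as coordinate sign-flips on the encoding sequences), then use cofinal equivalence of tails within a single pool, and finally observe that the reflections preserve layers so that the two shifts must coincide in the same-layer case. Your write-up is considerably more explicit about the bookkeeping than the paper's terse argument, but the underlying idea is the same.
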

\begin{proof} 
By Theorem \ref{2kpools}, all pools in $\T$ are congruent with respect
to tiles. This was proven by the fact, that any two pools are mapped
to each other by reflections $\tau_i$ in hyperplanes supporting the
boundary of the pools. This means that all tails are cofinally
equivalent after applying some of the reflections $\tau_i$. These
reflections act as multiplication by $-1$ in the $i$-th coordinate,
and the first claim follows. The second claim covers the special case
where the tiles are contained in the same layer. Then the tails
coincide from some common position $m$ on, up to reflections
$\tau_i$. 
\end{proof}

\section{\bf Symmetries of B\"or\"oczky Tilings in $\H^{d+1}$}
\label{h5} 

In the last section we strongly used the fact that horospheres in
$\H^{d+1}$ are isometric to $\E^d$, compare for instance
\cite[\S 4.7]{rat}. This will also be useful in the sequel, where we
apply the results of the last section to determine all possible
symmetries of a B-tiling. 

\begin{thm} \label{thmsymgrp}
Let $\T$ be a B\"or\"oczky tiling  in $\H^{d+1}$ with $2^k$
pools, let $0\leq k\leq d$, and denote by $\Sym(\T)$ its symmetry
group. Then $\Sym(\T)$ is isomorphic to
\begin{itemize}
\item $\Z \times B_k$ if there is a periodic sequence in $\T$, or
\item $B_k$ else;
\end{itemize}
where $B_k$ is  the symmetry group of a $k$-cube.
\end{thm}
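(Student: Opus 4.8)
The plan is to work in the upper half-space model of $\H^{d+1}$ with the common ideal point $\mathcal{O}$ of the horospheres placed at $\infty$, so that the horospheres $E_i$ become horizontal copies of $\E^d$ and every $\varphi \in \Sym(\T)$ restricts on the boundary $\E^d$ to a Euclidean similarity $x \mapsto \lambda A x + b$ with $A \in O(d)$, $\lambda > 0$, $b \in \E^d$. The first step is to pin down the vertical behaviour. The family $\{E_i\}$ is intrinsic to $\T$ (the horospheric facets lie on it) and $\mathcal{O}$ is its unique common ideal point, so $\varphi(\mathcal{O})=\mathcal{O}$; moreover each tile carries a single $a$-facet but $2^d$ $b$-facets, so the up/down direction is combinatorially distinguished and $\varphi$ cannot reverse it. Hence $\varphi$ maps $E_i$ to $E_{i+c}$ for a fixed $c \in \Z$ and $\lambda = 2^{-c}$. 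This yields a homomorphism $h\colon \Sym(\T)\to\Z$, $\varphi\mapsto c$, whose kernel $\Sym_0(\T)$ consists of the \emph{horizontal} symmetries ($\lambda=1$, fixing every horosphere setwise and acting on $E_0$ by a Euclidean isometry $x\mapsto Ax+b$).

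The second step is to show $\Sym_0(\T)\cong B_k$. By Theorem \ref{2kpools} the $2^k$ pools meet $E_0$ in $2^k$ orthant-type regions separated by $k$ pairwise orthogonal coordinate hyperplanes through the central $(d-k)$-flat $F$; after centring coordinates at $F$ these give $k$ \emph{active} directions $x_1,\dots,x_k$ and $d-k$ \emph{passive} directions. A horizontal symmetry permutes the pools, hence fixes $F$ and the active/passive splitting, so $A = A_{\mathrm{act}}\oplus A_{\mathrm{pas}}$ with $A_{\mathrm{act}}\in O(k)$ preserving the active coordinate-hyperplane arrangement (thus $A_{\mathrm{act}}\in B_k$) and $b$ supported on passive directions. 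For the reverse inclusion I would exhibit generators of $B_k$: the $k$ reflections in the separating hyperplanes are symmetries (this is exactly the reflection $\tau$ used in the proof of Theorem \ref{2kpools}, carrying one pool onto the adjacent pool), and every permutation of the active axes is a symmetry because the subdivision rule \eqref{aik}--\eqref{bik} is symmetric in the coordinates and, by Proposition \ref{ncorona}, every finite prefix of $s(T)$ is realised. Concretely, in the pool whose active signs are eventually $+1$ the displacement of a tower base from its bounding hyperplane in active coordinate $i$ equals $\sum_{j:\sigma_i^{(j)}=-1}2^{j}$, a coordinate-equivariant quantity, so any signed permutation of the active axes carries $\T$ onto $\T$. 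The reflections and permutations together generate all of $B_k$.

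The heart of the argument is the rigidity forcing $A_{\mathrm{pas}}=I$ and $b=0$. A horizontal symmetry preserves the whole nested family of tower bases $u_0(T_j)$, which by Proposition \ref{pooltower} and \eqref{aik}--\eqref{bik} are cubes of edge $2^{j+1}$ exhausting each pool; in particular it preserves the induced cubical grids at \emph{every} scale $2^{j}$, including the arbitrarily coarse ones obtained by descending the tails. Preservation at all scales forces $A\in B_d$ and forces the translation to satisfy $b\in 2^{j}\Lambda$ for the integer lattice $\Lambda$ and all $j$, hence $b=0$. A nontrivial signed permutation of a passive axis $i$ is then excluded because it would have to respect the nesting in direction $i$, whose centres $m_i^{(j)}=(a_i^{(j)}+b_i^{(j)})/2$ perform, for a passive coordinate (both signs of $\sigma_i^{(j)}$ occurring infinitely often), a divergent walk with steps $\pm 2^{j}$; no line isometry can preserve such an unbounded, aperiodically nested system of intervals except the identity. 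This gives $\Sym_0(\T)=B_k$. I expect this passive-direction rigidity to be the main obstacle, since it must be established uniformly over every B-tiling and is precisely where the multi-scale, non-crystallographic structure is used.

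Finally I would compute the image $h(\Sym(\T))\subseteq\Z$ and split the extension. A symmetry with $c\neq 0$ shifts the layers by $c$ and rescales by $2^{-c}$; composing it with elements of $B_k$ and invoking Corollary \ref{alltails} (all tails agree cofinally up to sign flips) shows such a symmetry exists if and only if $s(T)$ is cofinally periodic, i.e.\ if and only if $\T$ contains a periodic sequence. In that case the achievable shifts form a nontrivial subgroup of $\Z$, so $h(\Sym(\T))\cong\Z$; otherwise it is trivial. The minimal positive shift can be realised by an isometry that translates-and-rescales along the central axis while fixing the coordinate frame of $F$, hence commuting with $B_k$, so the sequence $1\to B_k\to\Sym(\T)\to h(\Sym(\T))\to 1$ splits as a direct product. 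This yields $\Sym(\T)\cong \Z\times B_k$ when a periodic sequence exists and $\Sym(\T)\cong B_k$ otherwise, as claimed.
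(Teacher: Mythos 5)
Your overall architecture --- the homomorphism $h\colon\Sym(\T)\to\Z$ recording the layer shift, the identification of its kernel with the ``horizontal'' symmetries, the reflections in the pool-separating hyperplanes and in their bisectors generating $B_k$, and the use of Corollary \ref{alltails} to tie a nontrivial image of $h$ to cofinal periodicity --- is a clean reorganization of the paper's proof (its Claims 1--6), and those parts are sound. The problem sits exactly where you predicted it would: the passive-direction rigidity. Your argument there only excludes horizontal symmetries whose linear part maps each passive axis \emph{to itself} (where the divergent walk of the interval centres $m_i^{(j)}$ indeed rules out $x_i\mapsto -x_i+c$ and $x_i\mapsto x_i+c$, $c\neq 0$). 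It says nothing about a signed permutation sending a passive axis $i$ to a different passive axis $i'$. If the coordinate sequences happen to coincide, $\sigma_i^{(j)}=\sigma_{i'}^{(j)}$ for all $j$ (or agree up to a global sign), then by \eqref{aik}--\eqref{bik} every nested cube $u_0(T_j)$ is symmetric under the reflection in the diagonal hyperplane $x_i=\pm x_{i'}$ through the tail; that reflection fixes each $T_j$, hence the tail, hence by Proposition \ref{pooltower} the pool, hence the tiling. So it \emph{is} a symmetry, and it does not lie in $B_k$ (it mixes passive coordinates). Concretely, in $\H^3$ ($d=2$) take a one-pool tiling ($k=0$) with $\sigma_1^{(j)}=\sigma_2^{(j)}$ for all $j$, both signs occurring infinitely often and the sequence non-periodic: your proof (and the theorem) would give trivial symmetry group, yet the diagonal reflection is a nontrivial horizontal symmetry.

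This means the gap is not a missing epsilon but an obstruction: the statement you are proving fails for such coincidence tilings, so no completion of your rigidity step is possible without an additional hypothesis (e.g.\ that no two passive coordinate sequences agree cofinally up to sign) or a larger target group (a subgroup of $B_d$ containing $B_k$ that records these coincidences). For what it is worth, the paper's own proof stumbles at the same spot: its Claim 6 invokes the classification of point groups fixing a cube tiling of $\E^d$, which yields subgroups of $B_d$, and then asserts without justification that one lands in $B_k$; the diagonal reflection above is precisely an element of $B_d\setminus B_k$ that survives. Everything else in your proposal --- the splitting of the extension, the realization of the minimal shift, the generation of $B_k$ by the $\tau_i$ and the axis permutations via the corner tile --- matches the paper and is fine.
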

The notation $B_k$ follows Coxeter, see \cite{h}. $B_k$ is the
group with Coxeter diagram
\begin{center}
\epsfig{file=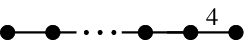, width=40mm}
\end{center}

\begin{proof} Let $S$ be the union of all $k$ hyperplanes bounding
the $2^k$ pools in $\T$. In the following, $\varphi$ denotes always a
symmetry of $\T$, i.e., an isometry of $\H^{d+1}$ with the property
$\varphi(\T)=\T$. Then, in particular, $\varphi(S)=S$.

The proof is organised as follows: First we prove that a goup
isomorphic to $B_k$ is contained in $\Sym(\T)$. Then we show that  
there is $\varphi \in \Sym(\T)$ which maps some horosphere $E_0$ to
some horosphere $E_j \ne E_0$ if and only if there is a tail which is
cofinally periodic (Claims 1,2,3). Finally it is shown that any
$\varphi \in \Sym(\T)$ fixing some horosphere $E_0$ is element of
$\bar{B}_k$ (Claims 4,5,6).   

By Theorem \ref{2kpools}, the intersection of the $k$ hyperplanes in
$S$ is a plane $H^{d-k+1}$ which is orthogonal to the horospheres $E_j$
in $\H^{d+1}$. 
The intersection of $S\cap E_j$ for any $j$ consists of $k$
pairwise orthogonal hyperplanes $h_1\ ldots, h_k$ in $E_j=\E^d$. They
share a common $d-k$-dimensional Euclidean plane.

Since a reflection $\tau_i$ in any of these $k$ hyperplanes, as we
have seen, keeps the tiling $\T$ invariant, the group generated
by the reflections $\langle\tau_i\rangle\subseteq \Sym(\T)$.

The restrictions $\nu_i$ of the hyperbolic reflections $\tau_i$
onto the $E$ are Euclidean reflections of $E$ in hyperplanes
$h_i$. Besides these reflections there are reflections $ \nu_{ij}$
in bisectors of all  dihedral angles between hyperplanes $h_i$ and
$h_j$, which also keep the set $S$ invariant. A Coxeter group
generated by all $\nu_i$ and $\nu_{ij}$, $1\leq i,j\leq k$, is
exactly a group $B_k$ of the $k$-dimensional cube. Denote by
$\bar{B}_k$ a Coxeter group $\langle\tau_i,\, \tau_{ij}\rangle$
generated by corresponding reflections of $\H^{d+1}$ and show that
$\bar{B}_k\subseteq \Sym(\T)$.

Let $\P\subseteq \bar{h}_i^+\cap \bar{h}_j^+$ and $\bar{h}_{ij}$ the
bisector of the dihedral angle $\angle \bar{h}_i\bar{h}_j$. Let us
consider a tile $T\in \P$ which is 'inscribed' into the $\angle
\bar{h}_i\bar{h}_j$, that is, two $c$-facets of $T$ 
lie on the boundary hyperplanes  $\bar{h}_i$ and $\bar{h}_j$.
The bisector $\bar{h}_{ij}$ dissects horospheric $a$- and $b$-facets
of $T$, which are $d$-dimensional Euclidean cubes, into two parts.
It is clear that the reflection $\tau_{ij}$ moves $T$ into itself.
So, $\tau_{ij}(T)=T$ and, consequently, $\tau_{ij}(t(T))=t(T)$ and
$\tau_{ij}(\P)=\P$.  By the face-to-face property, this implies
that $\tau_{ ij} (\T)=\T$. Therefore,  $\bar{B}_k\subseteq
\Sym(\T)$.

We proceed by showing that there are no other symmetries, except
possibly the ones arising from shifts along some line orthogonal to
the horospheres $E_i$ (possibly followed by some $\nu \in
\bar{B}_k$). These symmetries correspond to the occurrence of the
infinite cyclic group.  

{\em Claim 1:} $\varphi(t(T)) = t(\varphi(T))$.\\
The set $\varphi(t(T)) = \{ \varphi(T), \varphi(T_1), \varphi(T_2),
\ldots \}$ is clearly a tail. Since, for each fixed B-tiling $\T$,
each tail is uniquely determined by its first element, the claim
follows. 

{\em Claim 2:} If $T$ and $\varphi(T)$ are in
different layers, then $s(T)$ is cofinally periodic. \\
Let $T$ and $\tilde{T}=\varphi(T)$ be in different layers. Then, by
Corollary \ref{alltails}, there are $m,n \in \N$ such that for their
sequences holds:  
\begin{equation} \label{eq:j+mj+n}
\forall j \in \N: \; \sigma^{(j+m)}_i =  \tilde{\sigma}^{(j+n)}_i
\quad \mbox{or} \quad  \forall j \in \N: \; \sigma^{(j+m)}_i =
- \tilde{\sigma}^{(j+n)}_i \; (1 \le i \le d). 
\end{equation}
Since $T$ and $\tilde{T}$ are in different layers, we have $m \ne n$.
Without loss of generality, let $m > n$. 
By Claim 1, $t(T)$ and $t(\tilde{T})$ are congruent. Therefore
$s(T)$ and $s(\varphi(T))$ are identical, up to multiplication of
entire sequences $(\sigma_i^{(j)})_{j \in \N}$ by $-1$.  Thus, for
each $1 \le i \le d$, 
\begin{equation} \label{eq:jj}
\forall j \in \N: \; \sigma_i^{(j)}= \tilde{\sigma}_i^{(j)} 
\quad \mbox{or} \quad \sigma_i^{(j)}= - \tilde{\sigma}_i^{(j)} 
\end{equation}
Let $k=m-n$. From \eqref{eq:j+mj+n} and \eqref{eq:jj} follows for each
$1 \le i \le d$: 
\[ \forall j \in \N: \; \sigma^{(j+k)}_i = \pm \tilde{\sigma}^{(j)}_i =
\left\{ \begin{array}{l} \pm \sigma^{(j)}_i \\ \mp \sigma^{(j)}_i \\
  \end{array} \right. \]
We obtain either $\sigma^{(j+k)}_i = \sigma^{(j)}_i$, or
$\sigma^{(j+k)}_i = - \sigma^{(j)}_i$. In the second case holds for 
$j \ge k$: $\sigma^{(j+k)}_i = \sigma^{(j-k)}_i$. In each case, $p_i=2k$
is a period of $\sigma_i$. Then, the lowest common multiple of all the
$p_i$ is a period of $s(T)$. So we can already conclude: If the
sequence of one pool is not periodic, any possible symmetry of $\T$
maps tiles onto tiles in the same layer. 

{\em Claim 3:} If there is a cofinally periodic sequence $s(T)$ for some
$T \in \T$, then there exists a symmetry $\varphi$ such that
$\varphi^k(\T)=\T$ for all $j \in \Z$. \\
Let $s(T)$ be a periodic sequence in $\T$. Then there is a tile $T$ with a
tail $t(T)$ which belongs to this sequence. This tail is infinite in one
direction. In the other direction we can extend it in any way we want (since
by passing from $T$ through a $b$-facet to another tile, we can choose
each of the $2^d$ tiles which are lying there). In particular, we can extend
$t(T)$ to a biinfinite horospheric path
$t_b=(\ldots, T_{-2},T_{-1},T_0=T,T_1,T_2,\ldots)$, which belongs to a
biinfinite  periodic sequence
$s_b=(\ldots,\sigma^{(-2)},\sigma^{(-1)},\sigma^{(0)},
\sigma^{(1)},\sigma^{(2)},\ldots)$. Let $k$ be the period of $s_b$, then it
holds
\[ \cdots = \sigma^{(i-2k)} = \sigma^{(i-k)}=   \sigma^{(i)} = \sigma^{(i+k)}=
\sigma^{(i+2k)} = \cdots \]
for every $i \in \Z$. So for every $i \in \Z$, the set
$t^{(i)}=\{T_i,T_{i+1},T_{i+2} \ldots \}$ is congruent to
$t^{(i+k)}=\{T_{i+k},T_{i+k+1}, T_{i+k+2}, \ldots \}$. In other words,
there is an isometry $\varphi$ such that $\varphi(t^{(i)})=t^{(i+k)}$.
Consequently, $\varphi(t_b)=t_b$, and therefore $\varphi^j(t_b)=t_b$ for every
$j \in \Z$. From Proposition \ref{pooltower} follows that $t_b$ determines
its pool uniquely. By Theorem \ref{2kpools}, this pool
determines the whole tiling. It follows $\varphi^j(\T)=\T$ for $j \in \Z$.

The deduced symmetry $\varphi$ is obviously a shift along some line
$\ell$. If the period $k$ is not prime, it is possible that there is
an essential period (see the end of Section \ref{h2}) smaller than
$k$. However, some power of some symmetry $\psi$ corresponding to such
an essentail period is a shift along a line: $\psi^m = \varphi$, with
$\varphi$ as above.

By considering the action of $\varphi$ on tails $t(T)$, it
follows that $\varphi$ maps the horospheres $E_i$ to $E_{i+k}$. 
Hence $\varphi^m \ne \varphi^j$ for $m \ne j$. This gives rise to
the occurrence of an infinite cyclic group in the symmetry group of
$\T$. 

It remains to consider symmetries which fix some (and thus each)
horosphere $E_i$ in the sequel. This is the same as requiring
$\varphi$ to fix some layer $\Rr$: $\varphi$ then fixes also the boundary
$\partial \Rr = E_i \cup E_{i+1}$. Since $\varphi$ maps $a$-facets to
$a$-facets and $b$-facets to $b$-facets, it fixes $E_i$ as well as
$E_{i+1}$.  

{\em Claim 4:} Let $T \in \T$. Every symmetry $\varphi$, where
$\varphi(T)$ and $T$ are in the same layer $\Rr$ and in the same pool,
has a fixed point in $\Rr$. \\
By Claim 1, $\varphi$ maps $t(T)$ to $\varphi(t(T)) =
t(\varphi(T))$. Since $T$ and $\varphi(T)$ are in the same pool and in
the same layer, these tails coincide from some position $k$ on. 
In particular, there is $T_k \in t(T)$ such that $\varphi(T_k) =
T_k$. Thus, by Brouwer's fixed point theorem, $\varphi$ fixes some
point in $T_k$. Moreover, by the symmetry of $T_k$, $\varphi$ fixes
some point $x_a$ in the $a$-facet of $T_k$, as well as some point
$x_b$ in some $b$-facet of $T_k$. Being an isometry, $\varphi$ fixes
the line $\ell'$ through  $x_a$ and $x_b$ pointwise. Therefore, the
intersection $\ell' \cap \Rr$ consists of fixed points of $\varphi$. 

In fact, these symmetries are those arising from the reflections
$\tau_{ij}$. 

{\em Claim 5:} Let $T \in \T$. Every symmetry $\varphi$, where
$\varphi(T)$ and $T$ are in the same layer $\Rr$ but in different pools
$\P, \P'$, has a fixed point in this layer.\\
This can be shown in analogy to the proof of the last claim. By
Theorem \ref{2kpools}, $\P \cap \P'$ intersect in a common
$(d-k+1)$-plane. Thus there are tiles $\tilde{T} \in \P, \; \tilde{T}'
\in \P'$ having $d-k+1$-dimensional intersection. By the face-to-face
property, this intersection is a $(d-k+1)$-face $F$. By the proof of
Theorem \ref{2kpools}, this is also true for any pair of tiles
$\tilde{T}_k \in t(\tilde{T}), \tilde{T}'_k \in t(\tilde{T}')$, where
$k \in \N$. By Proposition \ref{pooltower}, there is $k$ such that
also holds: $\tilde{T}_k \in t(T), \tilde{T}'_k \in t(\varphi(T))$. 
Now, similar as in the proof of the last claim, $\varphi$ fixes a
$(d-k+1)$-face $F' = \tilde{T}_k \cap \tilde{T}'_k$. Moreover,
$\varphi$ fixes some point $x_a \in F$ ($x_b \in F$), contained in the
intersection of $F$ with the $a$-facets ($b$-facets) of $\tilde{T}_k$
and $\tilde{T}'_k$. As above, $\varphi$ fixes
the line $\ell'$ through $x_a$ and $x_b$ pointwise. Therefore, the
intersection $\ell' \cap R$ consists of fixed points of $\varphi$. 

In fact, these symmetries are the ones corresponding to the
reflections $\tau_i$. 

{\em Claim 6:}  Let $\varphi \in \Sym(\T)$ fix some (and thus each)
horosphere $E_i$. Then $\varphi \in \bar{B}_k$.\\
By the construction of a B-tiling, $\T$ induces a cube tiling of
$E_i$ by $a$-facets. For any symmetry $\varphi$ which fixes both $\T$ 
and $E_i$, the restriction $\nu := \varphi|_{E_i}$ fixes this cube
tiling. By the last two claims, each such $\nu$ has a fixed point in
$E_i$. In particular, $\nu$ is not a translation. But, fixing a cube
tiling in $E_i=\E^d$, the set of all these $\nu$ form a
crystallographic group.  The crystallographic groups which fix some
cube tiling and which contain no translations are well known, see for
instance \cite{h}. These are exactly the subgroups of $B_k$, which
proves Claim 6.

Altogether, we found two kinds of possible symmetries:
Symmetries in $\bar{B}_k$, and shifts along a line orthogonal to each
$E_i$, possibly followed by some map $\tau \in \bar{B}_k$. 
\end{proof}

A consequence of Theorem \ref{thmsymgrp} is that all
B\"or\"oczky-type tilings are non-cry\-stallo\-gra\-phic. This can
also be shown by Theorem \ref{crysthm}, along the same lines as in
Section \ref{sec:crystthm} for the 2-dimensional tiling: It is not
hard to convince oneself that the number of $k$-coronae in a
B\"or\"oczky-type tiling in $\H^{d+1}$ for $d > 2$ is strictly
larger than $2^{k-1}$ for $k \ge 2$, but a proper proof may be
lengthy, and yields no new result. 

As another consequence of Theorem \ref{thmsymgrp} we obtain:
\begin{corollary} Almost every B\"or\"oczky-type tiling has finite
  symmetry group.
\end{corollary}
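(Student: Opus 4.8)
The plan is to leverage Theorem~\ref{thmsymgrp} directly, which says that $\Sym(\T)$ is isomorphic to $\Z \times B_k$ when $\T$ contains a periodic sequence, and to $B_k$ otherwise. The group $B_k$ is always finite, so the symmetry group of a B-tiling is infinite precisely when the tiling admits a periodic tail-sequence $s(T)$. Thus the whole corollary reduces to the claim that the set of B-tilings possessing a periodic sequence is negligible with respect to whatever natural measure we put on the space of B-tilings.

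First I would make precise what ``almost every'' means. Once a horosphere $E_0$, a base cube, and a base tile are fixed, Corollary~\ref{alltails} tells us that all tails are cofinally equivalent up to sign-flips in entire coordinates; hence a B-tiling is determined, up to the discrete choices already accounted for by $B_k$ and the layer shift, by a single biinfinite sequence over the alphabet $\mathcal{A} = \{-1,1\}^d$, or equivalently by $d$ biinfinite $\pm1$-sequences $(\sigma^{(j)}_i)_{j \in \Z}$, one per coordinate. The natural parametrising space is therefore $\mathcal{A}^{\Z} \cong (\{-1,1\}^{\Z})^d$, equipped with the product of uniform $(\tfrac12,\tfrac12)$ Bernoulli measures. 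I would state the corollary as: with respect to this measure, almost every sequence gives a tiling with finite symmetry group.

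Next I would carry out the measure estimate. By Theorem~\ref{thmsymgrp} it suffices to show that the set of periodic (equivalently cofinally periodic) sequences has measure zero. For a fixed period $p$ and a fixed offset, the event ``$\sigma^{(j+p)} = \sigma^{(j)}$ for all large $j$'' is, coordinatewise, the event that two independent fair coin tosses agree, repeated independently over infinitely many indices $j$; by Borel--Cantelli (or simply because each matching constraint has probability $2^{-d} < 1$ and these constraints are mutually independent across a sparse arithmetic progression of indices) this event has probability zero. The set of cofinally periodic sequences is the countable union over all periods $p \ge 2$ and all finite offsets $n$ of these null events, hence is itself null. Therefore almost every sequence is aperiodic, and for almost every B-tiling $\Sym(\T) \cong B_k$ is finite.

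The main obstacle is not the probabilistic estimate, which is routine, but pinning down the correct parametrisation so that ``almost every'' is well defined and so that the reduction to a single sequence is legitimate. The subtlety is that a given tiling corresponds to a whole orbit of sequences (the sign-flip and shift ambiguities of Corollary~\ref{alltails}), but since that orbit is finite up to the shift and the shift is measure-preserving, the measure-zero conclusion descends to the quotient without trouble. I would remark that the same argument works for any shift-invariant measure on $\mathcal{A}^{\Z}$ that assigns positive probability to sufficiently many finite patterns, so the statement is robust and does not depend on the Bernoulli choice; this is why the informal ``almost every'' is justified.
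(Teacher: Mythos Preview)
Your argument is correct but takes a genuinely different route from the paper. Both proofs start identically, invoking Theorem~\ref{thmsymgrp} to reduce the question to showing that ``almost no'' tail-sequence is (cofinally) periodic. From there, however, the paper interprets ``almost every'' purely in the sense of cardinality: there are only countably many periodic sequences over $\mathcal{A}$ (each determined by a finite block), hence only countably many congruence classes of B-tilings with infinite symmetry group; on the other hand, since B-tilings are non-crystallographic, the paper cites \cite{do2} to conclude that there are uncountably many congruence classes altogether. That is the entire proof --- no measure is ever introduced.

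Your approach instead equips the sequence space with the Bernoulli $(\tfrac12,\tfrac12)$ measure and shows that periodic sequences form a null set. This requires more setup (defining the measure, checking that the orbit ambiguity from Corollary~\ref{alltails} descends to the quotient), but it is self-contained and gives a quantitatively sharper statement; the paper's version is shorter but leans on the external reference \cite{do2} for the uncountability. A minor remark: the paper parametrises tilings by one-sided tail sequences $s(T)\in\mathcal{A}^{\N}$ rather than biinfinite ones, but this does not affect either argument.
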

\begin{proof}
Theorem \ref{thmsymgrp} shows, that $\Sym(\T)$ is infinite, if and
only if there is a tail $t(T)$ with a periodic sequence $s(T)$.
Since there are only countably many of these, there are only
countably many congruence classes of B\"or\"oczky-type tilings in
$\H^{d+1}$ with an infinite symmetry group. Since the
B\"or\"oczky-type tilings are non-crystallographic, it follows
from \cite{do2} that there are uncountably many congruence classes
of them, which proves the claim.
\end{proof}

\section*{Acknowledgements}
This work was started 2003 during the workshop 'Mathematics of
Aperiodic Order' at the Krupp-Kolleg in Greifswald and supported, in
part, by RFBR grants 06-01-72551 and 08-01-91202.


\begin{thebibliography}{99}

\bibitem{and}
  J.W.\ Anderson,
  {\em Hyperbolic Geometry}, 
  Springer, New York (1999).

\bibitem{boe}
  K.\ B\"or\"oczky:
  G\"ombkit\"olt\'esek \'alland\'o g\"orb\"ulet\"u terekben I, II,
  {\em Mat. Lapok}, {\bf 25}, 265-306, {\bf 26}, 67-90 (1974).

\bibitem{ddsg}
  B.N.\ Delone, N.P.\ Dolbilin, M.I.\ Stogrin,  R.V.\ Galiulin:
  A local criterion for regularity of a system of points,
  {\em Soviet Math. Dokl.}, {\bf 17}, Vol. 2, 319-322 (1976).

\bibitem{do1}
  N.P.\ Dolbilin:
  Which clusters can form a crystal? in:
  {\em Voronoi's impact on modern science. Book II.}
  eds: P.\ Engel et al.,  Proc.\ Inst.\ Math.\ Natl.\ Acad.\ Sci.\
  Ukr., Math. Appl. {\bf 21} (2), pp. 96-104 (1998).

\bibitem{do2}
  N.P.\ Dolbilin:
  The countability of a tiling family and the periodicity of a tiling,
  {\em Discrete Comput. Geom.} {\bf 13}, 405-414 (1995).

\bibitem{ftk}
  G.\ Fejes T\'oth, W.\ Kuperberg:
  Packing and Covering with Convex Sets,
  {\em Handbook of Convex Geometry}, Vol. B,
  ed.\ P.M.\ Gruber, J.M.\ Wills, North-Holland, Amsterdam,
  pp. 861-898 (1993).

\bibitem{h}
  J.E.\ Humphreys:
  {\it Reflection Groups and Coxeter Groups},
  Cambridge University Press (1990).

\bibitem{mak}
  V.S.\ Makarov:
  On a non-regular partition of an $n$-dimensional Lobachevskii space
  by congruent polyopes (Russian), {\em Trudy Mat.\ Inst.\ Steklov}
  {\bf 196} (1991), translated in: Proc.\ Steklov Inst.\ Math.\ 1992
  no. 4, 103-106.

\bibitem{m}
  S.\ Mozes: Aperiodic Tilings, {\em Inv. Math.} {\bf 128} (1997)
  603-611.

\bibitem{p}
  R.\ Penrose: Pentaplexity. A class of non-periodic tilings
  of the plane, {\em Math. Intell.} {\bf 2}, (1979) 32-37.

\bibitem{rat}
  J.G./ Ratclife,
  {\em Foundations of Hyperbolic  Manifolds},
  Springer, New York (1994).

\bibitem{v}
 \`E.B.\  Vinberg, O.V.\ Shvartsman:
  Discrete groups of motions of spaces of constant curvature,
  Geometry, II, Encyclopaedia Math.\ Sci. {\bf 29},
  Springer, Berlin (1993) pp. 139-248.

\end{thebibliography}
\end{document}